\numberwithin{equation}{section}
\newtheorem{theorem}{Theorem}[section]
\newtheorem{proposition}{Proposition}[section]
\newtheorem{lemma}{Lemma}[section]
\newtheorem{remark}[theorem]{Remark}
\newtheorem{example}{Example}
\begin{document}
\title[Cyclic codes of length $n$ over finite chain rings ]{Cyclic codes of length $n$ over finite chain rings}
\author[S. Antil]{Seema Antil}
\email{seema.22maz0010@iitrpr.ac.in, seemaantil97@gmail.com}  
\author[G. Kaur]{Gurleen Kaur}
\email{gurleenkaur992gk@gmail.com}
\author[M. Khan]{Manju Khan}
\email{manju@iitrpr.ac.in}
\address{Department of Mathematics, Indian Institute of Technology, Ropar, Rupnagar-140001, Punjab, India.}
\subjclass[2020]{Primary: 94B15, 06F25, 20C05; Secondary: 94A05, 94B65.}
\keywords{ Cyclic code, local ring, primitive idempotent, simple component, weight.}
\begin{abstract}
\noindent In this paper, the cyclic codes of length $n$, where $n$ is odd with certain restrictions, over a finite chain ring $R$ of order $2^a$, have been studied using the structure of group algebra. The primitive idempotents of $RG$ of a finite cyclic group $G$ have been determined, and then the number of codewords and the minimum weight corresponding to a cyclic code have been computed.
\end{abstract}
\maketitle
\section{\textbf{INTRODUCTION}}\label{section1}
Let ${R}$ be a finite commutative chain ring with unity of order $2^a$ such that the order of the unique maximal ideal $M$ of $R$ is $2^{a-1}$, which is generated by a nilpotent element $s$, with nilpotency index $t$. Thus $\overline{R}=\frac{R}{M}$ is a field with two elements. Let ${R}^n$ denote the set of all $n$ tuples of the elements of ${R}$, which forms a ring with the natural way of addition and multiplication. Any subset $\mathcal{C}$ of ${R}^n$ is a $\mathbf{code}$ of length $n$. A code $\mathcal{C}$ is said to be a $\mathbf{linear}$ code if $\mathcal{C}$ is a submodule  of ${R}^n$ over ${R}$.
A linear code $\mathcal{C}$ is $\mathbf{cyclic}$ if for every $(c_0,c_1,...,c_{n-1})\in \mathcal{C}$ implies $(c_{n-1},c_0,...,c_{n-2})\in \mathcal{C}$. We consider the map $\eta: {R}^n \longrightarrow \frac{{R}[x]}{\langle x^n-1\rangle} $, $(c_0,c_1,...,c_{n-1})$ $\longmapsto$ $c_0+c_1x+...+c_{n-1}x^{n-1}$. Then $\eta$ is an ${R}$-module isomorphism. Therefore, there is a 1-1 correspondence between cyclic codes and ideals of $\frac {R[x]}{\langle x^n-1\rangle}$. If ${x^n-1 }=f_0f_1\cdots f_{m}$ is a product of irreducible polynomials over $R$, then $ \dfrac{R[x]}{\langle x^n-1\rangle}\cong  \mathop{\oplus}\limits_{i=0}^m\frac{R[x]}{\langle f_i \rangle }$.

Let $G$ be a cyclic group of odd order. Since ${R}G$ is semisimple, there exist  primitive (orthogonal) idempotents ${e_0}, \cdots ,{e_{m}} $ such that ${R}G={R}G{e_0} \oplus \cdots \oplus {R}G{e_{m}}$. Further, $RGe_i \cong \frac{R[x]}{\langle f_i\rangle }$ for all $0\leq i \leq m$. Since $\frac{R[x]}{\langle f_i\rangle }$ is a chain ring with  ideals of the form $s^j\frac{R[x]}{\langle f_i\rangle }$ for all $0\leq j \leq {t-1}$ (for more details, see \cite{HS}). Let $I$ be an ideal of  $\frac{{R}[x]}{\langle x^n-1\rangle}$. Then $I$ can be written as $\mathop{\oplus}\limits_{i=0}^mI_i$, where $I_i$ is an ideal of $RGe_i$, and hence $\mathcal{C}$ is of the form $\mathcal{C}=\mathop{\oplus}\limits_{i=0}^m\langle s^je_i \rangle $ for some $0 \leq j \leq t-1$.

 For an element $\alpha =\mathop{\sum}\limits_{g\in G}\alpha_gg\in RG$, the $\mathbf{support}$ of $\alpha$ is defined as $supp(\alpha)=\{g \in G ~|~ \alpha_g\neq 0\}$.
The $\mathbf{weight}$ of $\alpha$ is defined as $wt(\alpha)=|supp(\alpha)|.$
For an ideal $\mathcal{I}$ of $RG$, define the $\mathbf{minimum~ weight}$ of $\mathcal{I}$ as  $wt(I)=min\{wt(\alpha)~|~ 0\neq \alpha\in \mathcal{I}\}.$

The following results are useful for computing the number of codewords and the minimum weight of a cyclic code $\mathcal{C}$.

\begin{proposition} \rm{(\cite[Proposition $2.1$]{HS})}
    Let $R$ be a finite chain ring with maximal ideal $M = \langle s \rangle$. Then the following are equivalent:
\begin{itemize}
    \item[(i)] $R$ is a local ring and the maximal ideal $M$ of $R$ is principal.
    \item[(ii)] $R$ is a local principal ideal ring.
    \item[(iii)] $R$ is a chain ring.
\end{itemize}
\end{proposition}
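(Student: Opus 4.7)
The plan is to prove the cyclic implications (iii) $\Rightarrow$ (i) $\Rightarrow$ (ii) $\Rightarrow$ (iii). The single tool powering all three steps will be the finiteness of $R$: since $R$ is Artinian, every ideal is finitely generated, and every non-unit is nilpotent (the descending chain of its powers must stabilize at $0$).

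For (iii) $\Rightarrow$ (i), since $R$ is finite there are only finitely many ideals, and they form a chain, so there is a largest proper ideal, which is the unique maximal ideal $M$; in particular $R$ is local. Writing $M = \langle s_1, \ldots, s_k \rangle$ using finiteness, the principal subideals $\langle s_i \rangle$ form a chain, so one of them contains the rest and equals $M$, showing that $M$ is principal.

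The heart of the argument is the implication (i) $\Rightarrow$ (ii), for which I would establish a canonical form: every nonzero $r \in R$ admits a representation $r = u s^j$ with $u \in R^\times$ and $0 \leq j < t$, where $s^t = 0$ is the nilpotency index of $s$. This is proved by a short induction: if $r$ is a unit, take $j = 0$; otherwise $r \in M = \langle s \rangle$, so $r = r' s$ for some $r'$, and we recurse on $r'$, the process terminating because $s$ is nilpotent. Granted the canonical form, an arbitrary nonzero ideal $I$ is generated by $s^{j_0}$, where $j_0$ is the minimal exponent appearing in the canonical forms of elements of $I$; hence $I$ is principal and (ii) follows.

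For (ii) $\Rightarrow$ (iii), the maximal ideal $M$ is principal (as it is an ideal in a PIR), so the canonical form from the previous paragraph applies verbatim and shows that every ideal of $R$ is of the form $\langle s^j \rangle$ for some $0 \leq j \leq t$. Since $j \mapsto \langle s^j \rangle$ is inclusion-reversing, these ideals are linearly ordered, so $R$ is a chain ring. The main obstacle is the canonical-form lemma in the step (i) $\Rightarrow$ (ii); once it is in place, each of the remaining implications is essentially bookkeeping.
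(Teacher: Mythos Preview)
The paper does not give its own proof of this proposition; it is quoted verbatim from \cite{HS} and used as a black box, so there is no in-paper argument to compare your proposal against.

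That said, your proposed proof is essentially correct and is in fact the standard argument. One small point deserves tightening: the parenthetical justification ``the descending chain of its powers must stabilize at $0$'' is not quite complete as stated. Artinianity gives $\langle a^n\rangle=\langle a^{n+1}\rangle$ for some $n$, hence $a^n=ra^{n+1}$ and $a^n(1-ra)=0$; you then need to invoke locality (if $a$ is a non-unit then $ra\in M$, so $1-ra$ is a unit) to conclude $a^n=0$. Once that is made explicit, the canonical-form lemma and the three implications go through exactly as you outline.
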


\begin{proposition} \rm{(\cite[Proposition $2.2$]{HS})} 
 Let $R$ be a finite chain ring with maximal ideal ${M}=\langle s\rangle $. Then
\begin{itemize}
    \item[(a)]  for some prime $q$ and positive integers $\tau$ and $l$ with $\tau \geq l$, we have
    \[
    |{R}| = q^\tau, \quad |\overline{{R}}| = q^l,
    \]
    and the characteristic of $R$ and $\overline{ R}$ are powers of $q$.

    \item[(b)]  \[
    |\langle a^i \rangle| = |\overline{\mathcal{R}}|^{t - i}, \quad \text{for } 0 \leq i \leq t.
    \]
    In particular, 
    \[
    | R | = |\overline{{R}}|^l,
    \]so $\tau = lt$.
\end{itemize}

\end{proposition}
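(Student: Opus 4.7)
The plan is to split the argument along the two parts of the statement. For part (a), I would begin with the observation that $\overline{R}=R/M$ is a finite field (being a finite commutative ring whose only ideals are $0$ and itself), so $|\overline{R}|=q^l$ for some prime $q$ and integer $l\geq 1$, with $\mathrm{char}(\overline{R})=q$. To show $\mathrm{char}(R)$ is also a power of $q$, I would consider the prime subring $\mathbb{Z}/n\mathbb{Z}\hookrightarrow R$, where $n=\mathrm{char}(R)$: if $n$ had two distinct prime divisors, the Chinese Remainder Theorem would produce a nontrivial idempotent in $R$, contradicting the fact that a local ring has only the idempotents $0$ and $1$; hence $n$ is a prime power, and since the image of $n$ in $\overline{R}$ is $0$, that prime must be $q$. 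Consequently the additive group of $R$ has exponent a power of $q$, so $|R|=q^\tau$ for some $\tau\geq 1$, and the bound $\tau\geq l$ follows because $|\overline{R}|$ divides $|R|$.

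For part (b), the strategy is to analyze the descending filtration
\[
R=\langle s^{0}\rangle \supseteq \langle s\rangle \supseteq \langle s^{2}\rangle \supseteq \cdots \supseteq \langle s^{t}\rangle =0
\]
one quotient at a time. For each $0\leq i\leq t-1$ I would define the $R$-linear map $\varphi_i:R\to \langle s^i\rangle/\langle s^{i+1}\rangle$ by $r\mapsto rs^i+\langle s^{i+1}\rangle$; this is evidently surjective, and its kernel contains $M=\langle s\rangle$, so it factors through a surjection $\overline{R}\twoheadrightarrow \langle s^i\rangle/\langle s^{i+1}\rangle$. A downward induction starting from $|\langle s^t\rangle|=1$ and using $|\langle s^i\rangle|=|\overline{R}|\cdot|\langle s^{i+1}\rangle|$ would then give $|\langle s^i\rangle|=|\overline{R}|^{t-i}$; specialising to $i=0$ yields $|R|=|\overline{R}|^t$, whence $q^\tau=q^{lt}$ and $\tau=lt$.

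The main obstacle is the injectivity of the induced map $\overline{R}\to \langle s^i\rangle/\langle s^{i+1}\rangle$, that is, showing that $rs^i\in \langle s^{i+1}\rangle$ forces $r\in M$. I would argue by contradiction: if $r$ were a unit, then $\langle rs^i\rangle=\langle s^i\rangle$ would be contained in $\langle s^{i+1}\rangle$, giving $s^i=us^{i+1}$ for some $u\in R$, i.e.\ $s^i(1-us)=0$. Since $us\in M$ and $M$ is the unique maximal ideal, $1-us$ is a unit, so $s^i=0$, contradicting $i\leq t-1$ together with the fact that $t$ is the nilpotency index of $s$. This is the only place where the chain-ring hypothesis is used essentially; the remaining counting is formal.
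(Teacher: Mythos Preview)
The paper does not actually give a proof of this proposition: it is quoted verbatim as background from \cite[Proposition~2.2]{HS} and used without argument, so there is no ``paper's own proof'' to compare against. Your proposed argument is correct and is essentially the standard one found in the cited reference: part~(a) via the local-ring idempotent trick to force the characteristic to be a prime power, and part~(b) via the filtration $\langle s^i\rangle/\langle s^{i+1}\rangle\cong \overline{R}$, which is exactly how Dinh and L\'opez-Permouth proceed. One small cosmetic point: the statement as printed has two evident typos (``$\langle a^i\rangle$'' should be ``$\langle s^i\rangle$'' and ``$|R|=|\overline{R}|^l$'' should read ``$|R|=|\overline{R}|^t$''), and your proof silently corrects both, which is the right thing to do.
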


\begin{proposition} \rm{(\cite[Proposition $2.2$]{ACP})} \label{pq}
   Let $R$  be a chain ring, and ${M}$ be the maximal ideal of  $R$. Then,  for any group $G$
\[
\frac{{RG}} {MG} \cong \left (\frac{R}{M}\right){G}
\]
as ${RG}$-module.
\end{proposition}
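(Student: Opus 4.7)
The plan is to construct an explicit reduction-of-coefficients map and apply the first isomorphism theorem. Define
$$\pi : RG \longrightarrow \left(\frac{R}{M}\right)G, \qquad \pi\Bigl(\sum_{g\in G} \alpha_g g\Bigr) \;=\; \sum_{g\in G}(\alpha_g + M)\, g,$$
where the $RG$-module structure on $(R/M)G$ is the one induced by the canonical ring projection $R \twoheadrightarrow R/M$. The map $\pi$ is well-defined because only finitely many $\alpha_g$ are nonzero, and it is additive in an obvious way.

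The first step is to verify that $\pi$ is an $RG$-module homomorphism. Since it clearly commutes with the group elements (both sides act on $g$ the same way), it suffices to check $R$-linearity, which follows directly from the definition of the quotient $R/M$. The second step, surjectivity, is immediate: any $\sum_g \overline{\beta_g}\, g \in (R/M)G$ is the image of any lift $\sum_g \beta_g g \in RG$.

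The third and crucial step is to identify the kernel. By definition, $\pi(\sum \alpha_g g) = 0$ iff $\alpha_g \in M$ for every $g \in G$. I would then argue that this condition characterizes $MG$, interpreting $MG$ as the $RG$-submodule (equivalently, the two-sided ideal) of $RG$ generated by $M$; unwinding the generation, a general element of $MG$ is a finite sum $\sum_i m_i \beta_i$ with $m_i \in M$ and $\beta_i \in RG$, and collecting coefficients shows that this is precisely the set of $\alpha = \sum_g \alpha_g g$ with every $\alpha_g \in M$. Applying the first isomorphism theorem for $RG$-modules then yields the desired isomorphism $RG/MG \cong (R/M)G$.

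The main obstacle is really just the identification $\ker\pi = MG$: one must be careful that $MG$ is not a priori defined as the coefficient-wise description but as the submodule generated by $M$, so the equality requires the short verification above. Everything else is formal from the universal property of the quotient $R/M$ and the fact that the group basis $\{g : g \in G\}$ of $RG$ maps bijectively to a generating set of $(R/M)G$, so no coefficient collapses occur beyond those forced by reduction modulo $M$.
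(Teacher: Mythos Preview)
Your argument is correct and is exactly the standard reduction-of-coefficients proof one would expect. Note, however, that the paper does not actually supply its own proof of this proposition: it is quoted verbatim as \cite[Proposition~2.2]{ACP} and used only as a background tool, so there is no in-paper argument to compare against. Your approach (explicit coefficient-reduction map, surjectivity, kernel identification via the coefficient-wise description of $MG$, first isomorphism theorem) is the canonical one and would be precisely what the cited source contains.
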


\begin{theorem} \cite{ACP} \label{3}
    Let ${R}$ be a finite chain ring such that $|{R}| = q^k$, and let ${G}$ be a cyclic group of order $n$ such that $(q, n) = 1$.  
If $\{\overline{e}_0, \ldots, \overline{e}_m\}$ is the complete set of primitive orthogonal idempotents in $\overline{{R}}{G}$, then $\{e_0, \ldots, e_m\}$ is the complete set of primitive orthogonal idempotents in ${RG}$.

\end{theorem}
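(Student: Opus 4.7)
The plan is to transfer the idempotent decomposition of $\overline{R}G$ back to $RG$ through the surjection $\pi \colon RG \to \overline{R}G$ whose kernel is $MG$ by Proposition \ref{pq}. The decisive observation is that, since $M$ has nilpotency index $t$ in $R$, the ideal $MG$ satisfies $(MG)^t = 0$ and is therefore a nilpotent two-sided ideal of $RG$. Once that is in hand, the theorem will follow from the classical theory of lifting idempotents modulo a nilpotent ideal.

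First I would invoke (or prove from scratch) the standard lifting lemma: given a nilpotent two-sided ideal $J$ of a ring $A$, every idempotent of $A/J$ is the reduction of an idempotent of $A$, and a complete orthogonal system in $A/J$ can be lifted to a complete orthogonal system in $A$ summing to $1$. The single-idempotent step is a short calculation: pick any preimage $a$ of $\overline{e}$, observe that $a^2 - a \in J$ is nilpotent, and replace $a$ by an appropriate polynomial in $a$ that becomes idempotent modulo increasing powers of $J$ (a Newton-type iteration that terminates because $J^t = 0$). For the full system, one lifts $\overline{e}_0$ first and then lifts each successive $\overline{e}_i$ inside the corner ring $(1 - e_0 - \cdots - e_{i-1})\, RG\, (1 - e_0 - \cdots - e_{i-1})$, where the kernel of reduction remains nilpotent. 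Applying this with $A = RG$ and $J = MG$ yields mutually orthogonal idempotents $e_0, \ldots, e_m \in RG$ with $e_0 + \cdots + e_m = 1$ whose reductions are the prescribed $\overline{e}_i$.

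Next I would verify that each lifted $e_i$ is primitive in $RG$. If $e_i = f_1 + f_2$ is an orthogonal decomposition into nonzero idempotents in $RG$, then reducing modulo $MG$ gives $\overline{e}_i = \overline{f}_1 + \overline{f}_2$ in $\overline{R}G$; primitivity of $\overline{e}_i$ forces one reduced summand to vanish, say $\overline{f}_1 = 0$, so $f_1 \in MG$. But an idempotent contained in a nilpotent ideal must itself be zero, contradicting $f_1 \neq 0$. Hence $e_i$ is primitive. Completeness of $\{e_0,\ldots,e_m\}$ is automatic because the family is already arranged to be pairwise orthogonal and to sum to $1$.

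The main obstacle is the inductive book-keeping that produces the lifts \emph{simultaneously} as an orthogonal system summing to $1$, rather than just one at a time. The Newton iteration for a single idempotent is routine, but coordinating it so that each new lift is orthogonal to all previous ones and the running sum stays correct modulo higher powers of $MG$ is the step that needs the most care. Everything else is a direct consequence of $MG$ being nilpotent, together with the semisimplicity of $\overline{R}G$ guaranteed by Maschke's theorem (using $(q,n)=1$), which ensures that the starting set $\{\overline{e}_0,\ldots,\overline{e}_m\}$ is indeed a complete primitive orthogonal system in $\overline{R}G$.
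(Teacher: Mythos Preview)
The paper does not supply its own proof of this statement; it is quoted verbatim from \cite{ACP} as a preliminary result in Section~1, so there is no argument here to compare against. Your proposal is a correct and standard proof: $MG$ is nilpotent, idempotents lift modulo a nilpotent ideal, orthogonality and primitivity are preserved for the reasons you give, and completeness follows from $\sum e_i = 1$.

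That said, it is worth noting that when the paper actually needs to lift idempotents (in the proof of Theorem~\ref{4}), it does not rely on this cited theorem directly but instead invokes the explicit formula from \cite{HMR} recorded just before that proof: if $a$ is nilpotent of index $k$ in a commutative ring $R$ and $R/\langle a\rangle$ has characteristic $s$, then an idempotent $f + \langle a\rangle G$ of $(R/\langle a\rangle)G$ lifts to the idempotent $f^{s^{k-1}}$ of $RG$. In the situation at hand this gives the closed form $e = f^{2^{t-1}}$, bypassing the Newton iteration entirely. Your approach is more general (it does not require commutativity or knowledge of the residue characteristic), while the formula the paper borrows is less flexible but yields the lifted idempotents in a shape that can be written down and manipulated explicitly, which is what Section~2 needs.
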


\begin{theorem} \cite{ACP}
Let ${M} = \langle s \rangle$ be the maximal ideal of a finite chain ring ${R}$ such that $\left |\frac{R}{M}\right | = q^l$. Let ${G}$ be the cyclic group of order $n$ such that $(q, n) = 1$.  If $\mathcal{I}$ is an ideal of ${RG} e_i$ and $t$ is the nilpotency index of $s$ then $\mathcal{I} = \langle s^{j} e_i \rangle, ~\text{where~ } 0 \leq j \leq t.$

\end{theorem}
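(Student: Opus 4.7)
The plan is to establish that $RGe_i$ is itself a finite chain ring and then read off its ideal lattice using Proposition~2.1 from the excerpt. This reduces everything to two things: (1) $RGe_i$ is local with principal maximal ideal $\langle se_i\rangle$, and (2) the nilpotency index of $se_i$ in $RGe_i$ is exactly $t$.

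For the local/principal part I would argue as follows. By Theorem~\ref{3}, $\overline{e}_i$ is a primitive idempotent of $\overline{R}G$, and since $G$ is cyclic and $\overline{R}G$ is commutative semisimple, the simple component $\overline{R}G\overline{e}_i$ is a field. Combining this with Proposition~\ref{pq} and the surjection $RGe_i \twoheadrightarrow RGe_i/MGe_i$ I would identify
\[
\frac{RGe_i}{MGe_i}\cong \overline{R}G\,\overline{e}_i,
\]
so $MGe_i$ is a maximal ideal of $RGe_i$ with field quotient. Any element of $RGe_i\setminus MGe_i$ maps to a unit of that field and can be lifted to a unit in $RGe_i$ by a standard nilpotent-lifting argument (using that $s$ is nilpotent), which shows $MGe_i$ is the \emph{unique} maximal ideal of $RGe_i$. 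Inside $RGe_i$ this ideal is generated by the single element $se_i$, so $RGe_i$ is a local ring with principal maximal ideal. By the equivalence (i)$\Leftrightarrow$(iii) of Proposition~2.1 in the excerpt, $RGe_i$ is a chain ring.

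For the nilpotency index, note $(se_i)^t = s^t e_i = 0$, so the nilpotency index of $se_i$ is at most $t$. Conversely, if $s^{t-1}e_i = 0$ in $RG$, then every coefficient of $e_i$ would lie in the annihilator of $s^{t-1}$, which is $M$, forcing $\overline{e}_i = 0$ in $\overline{R}G$, contradicting the primitivity of $\overline{e}_i$. Hence the nilpotency index of $se_i$ in $RGe_i$ is exactly $t$. Since a finite chain ring has only the ideals $\langle s^j e_i\rangle$ for $0 \leq j \leq t$ (with $j=t$ giving the zero ideal), the conclusion $\mathcal{I}=\langle s^j e_i\rangle$ follows.

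I expect the main obstacle to be step (1): rigorously upgrading ``$MGe_i$ is a maximal ideal with field quotient'' to ``$MGe_i$ is the \emph{unique} maximal ideal,'' i.e., showing that every element of $RGe_i$ outside $MGe_i$ is a unit in $RGe_i$. This is where the nilpotent lifting (inverse of $1-\text{nilpotent}$ via geometric series, using $s^t=0$) has to be made precise within the subring $RGe_i$ rather than in $RG$. Everything else is bookkeeping on the chain $\langle e_i\rangle \supsetneq \langle se_i\rangle \supsetneq \cdots \supsetneq \langle s^{t-1}e_i\rangle \supsetneq 0$.
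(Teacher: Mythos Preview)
The paper does not supply its own proof of this theorem: it is quoted verbatim from \cite{ACP} and left unproved. The only indication in the paper of \emph{why} the result holds is the sentence in the introduction noting that $RGe_i \cong R[x]/\langle f_i\rangle$ with $f_i$ irreducible, and that such quotients are chain rings with ideals $s^j R[x]/\langle f_i\rangle$ by \cite{HS}. So the implicit route in the paper is: pass to the polynomial model and invoke an external structure theorem.

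Your argument is correct and takes a genuinely different, more self-contained route. Rather than using the isomorphism to $R[x]/\langle f_i\rangle$, you stay inside $RGe_i$ and verify the hypotheses of Proposition~1.1 directly: (i) the quotient $RGe_i/MGe_i\cong \overline{R}G\,\overline{e}_i$ is a field by Proposition~\ref{pq} and Theorem~\ref{3}; (ii) the geometric-series inversion of $e_i+z$ with $z\in sRGe_i$ (hence $z^t=0$ by commutativity) shows every element outside $MGe_i$ is a unit, so $MGe_i=\langle se_i\rangle$ is the unique maximal ideal; (iii) the nilpotency-index computation via the coefficient argument (using $\mathrm{Ann}_R(s^{t-1})=\langle s\rangle$ in the chain ring $R$) is clean and correct. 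The obstacle you flagged is not a real gap: the inverse $e_i-z+z^2-\cdots\pm z^{t-1}$ visibly lies in $RGe_i$ since each term does. What you gain over the paper's sketch is that you never need the factorization $x^n-1=\prod f_i$ or Hensel lifting from \cite{HS}; what the polynomial-quotient approach buys is that the chain-ring structure of $R[x]/\langle f\rangle$ for basic irreducible $f$ is a standard citable fact, so no local-ring verification is needed.
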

Let $H$ be a finite subgroup of a group $G$ and $R$ be a ring such that the order of $H$ is invertible in $R$. Then define $\widehat{H}=\frac{1}{|H|}\mathop{\sum}\limits_{h\in H}h$, which is an idempotent of $RG$ if and only if $H$ is a normal subgroup of $G$. In particular, for $H=G$, $\widehat{G}$ is a primitive idempotent of $RG$. If $H=\langle a\rangle$ then we denote  $\widehat{H}=\widehat{\langle a\rangle}$ by $\widehat{a}$. To calculate the primitive idempotents of $RG$, the following theorem plays a crucial role. Further, the proof of the theorem also provides information on simple components. We restate the  Theorem $3.1$ of \cite{RFM} as follows:
\begin{theorem} \label{xy} 
    Let $K$ be a field with $q$ elements and ${A}$ be a cyclic group of order $p^n$ , where $p$ is an odd prime and $(q, p^n)=1$ such that order of $q$ is $\phi(p^n) $ in $U(\mathbb{Z}_{p^n})$. Let
\begin{align*}
{A} = A_0 \supseteq A_1 \supseteq \cdots \supseteq A_n = \{1\}
\end{align*}
be the descending chain of subgroups of ${A}$ with $A_i= \langle a^{p^i}\rangle,\; 1\leq i\leq n$.
Then, the primitive idempotents of ${KA}$ are given by
\[
e_0 = \widehat{A}_0 \text{ and } e_i = \widehat{A_{i-1}}-\widehat{A_i}, \quad 1 \leq i \leq n
\] and the corresponding simple components are as follows
$KGe_i\cong K_{p^i-p^{i-1}}.$
\end{theorem}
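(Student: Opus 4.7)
My plan is to verify in three steps that the listed $e_i$ form a complete set of orthogonal idempotents, then to match each $e_i$ with its simple component via the Wedderburn decomposition of $KA$. The workhorse identity is that for any subgroups $H\subseteq L$ of an abelian group one has $\widehat{H}\,\widehat{L}=\widehat{L}$; this is immediate since each element of $H\subseteq L$ fixes $\widehat{L}$ under multiplication. Applied to the descending chain $A_0\supseteq A_1\supseteq\cdots\supseteq A_n$ it yields $\widehat{A_a}\widehat{A_b}=\widehat{A_a}$ whenever $a\le b$. Expanding $e_ie_j=(\widehat{A_{i-1}}-\widehat{A_i})(\widehat{A_{j-1}}-\widehat{A_j})$ for $i<j$ then makes all four cross terms cancel, giving mutual orthogonality; an analogous expansion collapses $e_i^2$ to $e_i$ (a sign that would survive in general characteristic is harmless in the characteristic-$2$ setting of this paper); and the telescoping $\widehat{A_0}+\sum_{i=1}^{n}(\widehat{A_{i-1}}-\widehat{A_i})=2\widehat{A_0}-\widehat{A_n}$ reduces to $1$ since $\widehat{A_n}=1$ and we are in characteristic $2$.

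Next I would pin down the Wedderburn decomposition of $KA$. Via the isomorphism $KA\cong K[x]/\langle x^{p^n}-1\rangle$ and the cyclotomic factorization $x^{p^n}-1=\prod_{i=0}^{n}\Phi_{p^i}(x)$, the task reduces to proving that each $\Phi_{p^i}$ remains irreducible over $K$. For this I would use the number-theoretic fact that the hypothesis $\mathrm{ord}_{p^n}(q)=\phi(p^n)$ forces $\mathrm{ord}_{p^i}(q)=\phi(p^i)$ for each $1\le i\le n$: the order mod $p^i$ divides both the order mod $p^n$ and $\phi(p^i)=p^{i-1}(p-1)$, while maximality at the top propagates downward because for odd $p$ the multiplicative order mod $p^{i+1}$ is always either the order mod $p^i$ or $p$ times it. Consequently $KA$ has exactly $n+1$ simple components, the $i$-th being $K[x]/\langle\Phi_{p^i}\rangle\cong K_{p^i-p^{i-1}}$.

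Finally, since I have $n+1$ orthogonal idempotents summing to $1$ and precisely $n+1$ simple components, each $e_i$ is automatically primitive and each $KAe_i$ must be one of the $K_{p^i-p^{i-1}}$ summands. To identify which, I would note that $\widehat{A_j}$ acts as the identity on a one-dimensional character $a\mapsto\zeta$ precisely when $\zeta^{p^j}=1$, so $\widehat{A_j}$ equals the sum of the primitive idempotents of the simple components where $\zeta$ is a $p^j$-th root of unity. Taking the difference in $e_i=\widehat{A_{i-1}}-\widehat{A_i}$ then isolates exactly the component corresponding to primitive $p^i$-th roots of unity, which is $K_{p^i-p^{i-1}}$.

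The main obstacle I expect is the number-theoretic step above: without $\mathrm{ord}_{p^i}(q)=\phi(p^i)$ for every $i\le n$, the polynomials $\Phi_{p^i}$ could split further over $K$, the number of simple components would exceed $n+1$, and the dimension-count argument for primitivity of the $e_i$ would collapse.
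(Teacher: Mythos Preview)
The paper does not actually prove this theorem; it is quoted verbatim as a restatement of Theorem~3.1 of Ferraz--Milies \cite{RFM}, with no argument supplied. So there is no in-paper proof to compare your proposal against, and your outline stands on its own as a proof sketch. The strategy you describe---verify orthogonality via $\widehat{A_a}\widehat{A_b}=\widehat{A_{\min(a,b)}}$, count simple components by showing each $\Phi_{p^i}$ stays irreducible over $K$, then match idempotents to components by dimension and by the action of $\widehat{A_j}$---is the standard one and is essentially what Ferraz--Milies do.

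One point worth flagging explicitly: the sign issue you allude to is not merely cosmetic. With the chain written as $A_0\supseteq A_1\supseteq\cdots$ and $e_i=\widehat{A_{i-1}}-\widehat{A_i}$ as in the paper's statement, a direct computation gives $e_i^2=-e_i$ and $\sum_i e_i=2\widehat{A_0}-1$, so the elements are genuine idempotents summing to $1$ only in characteristic $2$. In arbitrary characteristic the correct formula is $e_i=\widehat{A_i}-\widehat{A_{i-1}}$ (hat of the smaller subgroup minus hat of the larger), and indeed that is the convention the paper itself uses everywhere else, e.g.\ $\widehat{a_{i}^{p_i^{j_i}}}-\widehat{a_{i}^{p_i^{j_i-1}}}$ in Theorem~\ref{4} and throughout Section~\ref{5}. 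Your decision to lean on characteristic $2$ is therefore adequate for the paper's purposes, but if you want the statement to hold for general $q$ as written, you should correct the sign rather than invoke $\mathrm{char}\,K=2$. The number-theoretic step (primitivity mod $p^n$ forces primitivity mod $p^i$ for odd $p$) is correct and is exactly the hypothesis needed to keep each $\Phi_{p^i}$ irreducible.
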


The study of codes over finite chain rings is an active research topic in the area of coding theory, and one of its applications is the study of error-correcting codes through the number of codewords and the minimum weight of a given code $\mathcal{C}$. A code $\mathcal{C}$ as an ideal of finite group algebra $RG$ was introduced by Bermann \cite{SD} and MacWilliams \cite{FJ}. For more details (see \cite{HS, SJ, ACP, ZH, MJ, GA}). The dimension and the minimum distance for a cyclic code $\mathcal{C}$ of length $n$ over finite field of order $q$, such that $(q,n)=1$ has been studied in \cite{ PS}. Chalom et al. \cite{GRMC} provided the dimension and the weight for the cyclic group of order $p^mq^n$ over finite field of two elements. Silva, in his Ph.D thesis \cite{AT}, studied cyclic codes, their dual and self-dual codes over finite chain rings by using the structure of group algebra. In \cite{ACP},  Silva et al. studied the cyclic codes of length $2p^n$ over a finite chain ring.

In this article, we generalize the work done by Silva \cite{ACP} for cyclic codes of length $n$, where $n$ is odd with certain restrictions, over a finite chain ring $R$ with $2^a$ elements using group algebra approach. The set of primitive idempotents of $RG$ plays an important role in computing the number of codewords and the minimum weight of a cyclic code. In Section \ref{5}, we compute the number of primitive idempotents of $RG$ and provide an explicit description of some of the primitive idempotents of $RG$, which has been used in the subsequent sections to determine the number of codewords and the minimum weight of ideals in $RG$. In Section \ref{6}, we compute the number of codewords and in Section \ref{7}, we discuss the minimum weight for a cyclic code $\mathcal{C}.$ 
\section{\textbf{THE NUMBER OF SIMPLE COMPONENTS OF \it{RG}}}\label{5}
Let $R$ be a finite semisimple chain ring of order $2^a$ and $G$ be a cyclic group of odd order. In this section, we compute primitive idempotents of the semisimple group ring $RG$. The main theorem of this section is as follows:
\begin{theorem} \label{4}
      Let $R$ be a local ring and $M=\langle s \rangle $ be the maximal ideal of $R$, with nilpotency index $t$ such that $\frac{R}{M}\cong \mathbb{F}_2.$ The following are primitive idempotents of $RG$

\begin{enumerate}
    \item          $e_{0\cdots 0}= {\widehat{a_1}\cdots \widehat{a_r}}.$
   
        \item $e_{0\cdots j_{i_1}\cdots0} ={\widehat{a_1}}\cdots ( {\widehat{a_{i_1}^{p_{i_1}^{j_{i_1}}}}}-{\widehat{a_{i_1}^{p_{i_1}^{j_{i_1}-1}}}} )\cdots {\widehat{a_r}}.$
        \item \begin{enumerate}
            \item  $e_{0\cdots j_{i_1}\cdots j_{i_2} \cdots 0}^{(1)}={\widehat{a_1}}\cdots (u_{i_1}u_{i_2}+u_{i_1}^2u_{i_2}^2)^{2^{t-1}}\cdots {\widehat{a_r}}=(e{'}_{0\cdots j_{i_1}\cdots j_{i_2} \cdots 0}^{(1)})^{2^{t-1}},$
        \item   $e_{0\cdots j_{i_1}\cdots j_{i_2} \cdots 0}^{(2)}={\widehat{a_1}}\cdots (u_{i_1}^2u_{i_2}+u_{i_1}u_{i_2}^2)^{2^{t-1}}\cdots{\widehat{a_r}}=(e{'}_{0\cdots j_{i_1}\cdots j_{i_2} \cdots 0}^{(2)})^{2^{t-1}},$
        
        \end{enumerate}
   where ~ $e_{0\cdots j_{i_1}\cdots j_{i_2} \cdots 0}^{(1)}$ and $e_{0\cdots j_{i_1}\cdots j_{i_2} \cdots 0}^{(1)}$ are obtained in a way  $$e{'}_{0\cdots j_{i_1}\cdots j_{i_2} \cdots 0}^{(1)}+e{'}_{0\cdots j_{i_1}\cdots j_{i_2} \cdots 0}^{(2)}={\widehat{a_1}}\cdots( {\widehat{a_{i_1}^{p_{i_1}^{j_{i_1}}}}}-{\widehat{a_{i_1}^{p_{i_1}^{j_{i_1}-1}}}} )\cdots( {\widehat{a_{i_2}^{p_{i_2}^{j_{i_2}}}}}-{\widehat{a_{i_2}^{p_{i_2}^{j_{i_2}-1}}}} )\cdots {\widehat{a_r}}=e_{0\cdots j_{i_1}\cdots j_{i_2} \cdots 0}.$$

   \item $e_{j_1\cdots j_r}^{(1)}=(u_1u_2\cdots u_r+u_1^2u_2^2\cdots u_r^2)^{2^{t-1}}=(e{'}_{j_1\cdots j_r}^{(1)})^{2^{t-1}}$
\end{enumerate}
 ~~~~ where   $e{'}_{j_1\cdots j_r}^{(1)}+\cdots+e{'}_{j_1\cdots j_r}^{(2^{r-1})}=( {\widehat{a_{1}^{p_{1}^{j_{1}}}}}-{\widehat{a_{1}^{p_{1}^{j_{1}-1}}}})\cdots( {\widehat{a_{r}^{p_{r}^{j_{r}}}}}-{\widehat{a_{r}^{p_{r}^{j_{r}-1}}}} )=e_{j_1\cdots j_r},$  and $u_{i}$ defined as
     
\[
u_{i}^{2^{t-1}} =
\begin{cases}
{\widehat{a_{i}^{p_i^{j_{i}}}}}(a_{i}^{{2^0}{p_{i}^{j_{i}-1}}} + a_{i}^{2^2p_{i}^{j_{i}-1}} + \cdots + a_{i}^{2^{{p_{i}}-3}p_{i}^{j_{i}-1}})^{2^{t-1}}, & \text{if } p_{i} \equiv 1 \pmod{4},~or \\
{\widehat{a_{i}^{p_i^{j_{i}}}}}(1+a_{i}^{{2^0}{p_{i}^{j_{i}-1}}} + a_{i}^{2^2p_{i}^{j_{i}-1}} + \cdots + a_{i}^{2^{{p_{i}}-3}p_{i}^{j_{i}-1}})^{2^{t-1}}, & \text{if } p_{i} \equiv 3 \pmod{4}.
\end{cases}
\]
 \end{theorem}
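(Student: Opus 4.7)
The plan is to reduce the problem to finding primitive idempotents in $\overline{R}G\cong\mathbb{F}_2 G$ via Theorem \ref{3}, then exploit the decomposition $G=\langle a_1\rangle\times\cdots\times\langle a_r\rangle$ into cyclic prime-power factors of orders $p_i^{n_i}$, and finally lift the idempotents back to $RG$ by the $2^{t-1}$-th power operation indicated in the statement. Over $\mathbb{F}_2$ we have $\mathbb{F}_2 G\cong\bigotimes_{i=1}^{r}\mathbb{F}_2\langle a_i\rangle$, so primitive idempotents of $\mathbb{F}_2 G$ arise (possibly after further splitting of tensor products of field components) as products of primitive idempotents of the individual $\mathbb{F}_2\langle a_i\rangle$. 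The four items in the statement correspond respectively to: (1) every factor contributing the trivial Theorem \ref{xy}-idempotent $\widehat{A_0}=\widehat{a_i}$; (2) exactly one factor contributing a non-trivial Theorem \ref{xy}-idempotent; (3) two factors each contributing a \emph{split} idempotent; and (4) all $r$ factors simultaneously contributing split idempotents.

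First I would handle a single cyclic factor $\langle a_i\rangle$. When $\operatorname{ord}_{p_i^{n_i}}(2)=\phi(p_i^{n_i})$, Theorem \ref{xy} directly supplies the primitive idempotents $\widehat{A_0}$ and $\widehat{A_{j-1}}-\widehat{A_j}$; tensoring with $\widehat{a_k}$ in the remaining slots kills the other coordinates and produces items (1) and (2). When the order drops to $\phi(p_i^{n_i})/2$, the element $\widehat{A_{j-1}}-\widehat{A_j}$ is no longer primitive in $\mathbb{F}_2\langle a_i\rangle$ but splits as a sum of two primitive idempotents indexed by the two $2$-cyclotomic cosets in $U(\mathbb{Z}_{p_i^{j_i}})/U(\mathbb{Z}_{p_i^{j_i-1}})$. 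The expression for $u_i^{2^{t-1}}$ is precisely the sum of $a_i$-powers $a_i^{2^{2k}p_i^{j_i-1}}$ over one such coset, multiplied by $\widehat{a_i^{p_i^{j_i}}}$ to cut down to the correct rung of the chain. Whether $-1$ lies in the same $2$-cyclotomic coset as $1$ or in the other one depends on $p_i\bmod 4$, which accounts for the extra $1$ in the sum when $p_i\equiv 3\pmod{4}$: this shift ensures that $u_i$ picks out the non-residue coset so that $u_i$ together with its companion reconstruct $\widehat{A_{j_i-1}}-\widehat{A_{j_i}}$ after the appropriate power is taken.

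Next I would address the product situation of items (3) and (4). When two (or more) factors each contribute a split idempotent, the tensor product of the corresponding simple components over $\mathbb{F}_2$ is itself a direct sum of two (respectively $2^{r-1}$) field extensions, and the primitive idempotents are the Galois-orbit sums. Writing $u_i$ and $\overline{u}_i$ for the two sub-idempotent candidates in the $i$-th split piece, the two primitive idempotents inside $e_{0\cdots j_{i_1}\cdots j_{i_2}\cdots 0}$ are $u_{i_1}u_{i_2}+\overline{u}_{i_1}\overline{u}_{i_2}$ and $u_{i_1}\overline{u}_{i_2}+\overline{u}_{i_1}u_{i_2}$; using the Frobenius-conjugacy identity $\overline{u}_i\equiv u_i^2\pmod{M}$ (which itself needs separate verification in each $p_i\bmod 4$ case) reduces these to the closed forms of item (3). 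Iterating the same tensor-splitting argument for all $r$ factors yields the $2^{r-1}$ idempotents $e^{(k)}_{j_1\cdots j_r}$ of item (4), each being a sum over a diagonal-Frobenius orbit of $r$-tuples of cosets.

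Finally I would lift to $RG$. Theorem \ref{3} already guarantees a bijection between primitive idempotents of $\overline{R}G$ and of $RG$; the explicit lift is realised by the $2^{t-1}$-th power: since $2\in M$ and $M^t=0$, for any $e'\in RG$ whose image in $\mathbb{F}_2 G$ is idempotent one checks that repeated squarings push the error $e'^2-e'$ into higher and higher powers of $MG$, and for $2^{t-1}$ sufficiently large in the sense of these nilpotency bounds, $(e')^{2^{t-1}}$ is already an actual idempotent of $RG$ lifting its reduction. This explains the outer $2^{t-1}$-th power in items (3) and (4). The main obstacle I anticipate is the combinatorial bookkeeping in item (4), namely verifying that the specific symmetric expressions such as $(u_1u_2\cdots u_r+u_1^2u_2^2\cdots u_r^2)^{2^{t-1}}$ bijectively match the $2^{r-1}$ Galois orbits of $r$-tuples of cyclotomic cosets, so that each $e^{(k)}_{j_1\cdots j_r}$ is genuinely primitive rather than merely an orthogonal sub-idempotent of $e_{j_1\cdots j_r}$; this requires carefully tracking the action of the diagonal Frobenius across all $r$ factors simultaneously.
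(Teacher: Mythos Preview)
Your overall architecture---reduce to $\mathbb{F}_2G$ via Theorem~\ref{3}, exploit the tensor decomposition $\mathbb{F}_2G\cong\bigotimes_i\mathbb{F}_2\langle a_i\rangle$, then lift by $2^{t-1}$-th powers---matches the paper's. However, you have misidentified where the splitting in items (3) and (4) originates, and correspondingly misread the role of the elements $u_i$.

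The standing hypotheses (inherited from Theorem~\ref{uvw}) require that $2$ be a primitive root modulo $p_i^2$ for \emph{every} $i$. Hence $\operatorname{ord}_{p_i^{n_i}}(2)=\phi(p_i^{n_i})$ always holds, and by Theorem~\ref{xy} each $\widehat{A_{j_i}}-\widehat{A_{j_i-1}}$ is already primitive in $\mathbb{F}_2\langle a_i\rangle$: no single factor ever ``splits''. The case you introduce, in which the order drops to $\phi(p_i^{n_i})/2$ and the factor decomposes into two cyclotomic-coset idempotents, simply does not occur in this theorem. The splitting in items (3) and (4) is instead a pure tensor-product phenomenon: because $\gcd(p_{i_1}-1,p_{i_2}-1)=2$, the product of field components $\mathbb{F}_{2^{\phi(p_{i_1}^{j_{i_1}})}}\otimes_{\mathbb{F}_2}\mathbb{F}_{2^{\phi(p_{i_2}^{j_{i_2}})}}$ breaks into two copies of a field, and iterating gives the $2^{r-1}$ summands in item (4).

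Accordingly, the $\overline{u}_i$ are \emph{not} idempotents or ``sub-idempotent candidates'' in the $i$-th factor. In the paper they are chosen as non-trivial cube roots of the identity of the simple component, i.e.\ $\overline{u}_i^{\,3}=(\widehat{\overline{a}_i^{p_i^{j_i}}}-\widehat{\overline{a}_i^{p_i^{j_i-1}}})$ while $\overline{u}_i$ is not equal to that identity; one then verifies directly, using $\overline{u}_i^{\,3}=e$ and characteristic $2$, that $\overline{u}_{i_1}\overline{u}_{i_2}+\overline{u}_{i_1}^{\,2}\overline{u}_{i_2}^{\,2}$ and its companion are orthogonal idempotents summing to the product idempotent. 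Your Frobenius-orbit picture lands on the same formulas only because $\overline{u}_i+\overline{u}_i^{\,2}$ happens to equal that identity in characteristic $2$, but the mechanism you describe (treating $u_i$ and its ``conjugate'' as pre-existing idempotents in each factor and then taking Galois orbits of their tensor products) is not what is happening and, as written, would not establish primitivity.
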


Since $\frac{R}{M}\cong \mathbb{F}_2$, using a similar approach as given in Theorem $4.10$ of \cite{GRMC}, we firstly compute primitive idempotents of $\mathbb{F}_2G$. 

\begin{theorem} \label{uvw}
    Let $G=<a_1> \times <a_2>\times \cdots \times <a_r>$  be a cyclic group of order $p^{n_1}_1p^{n_2}_2\cdots p^{n_r}_r$, where $o(a_i)=p_i^{n_i}$ $\forall~1\leq i \leq r$, $n_1,n_2,\cdots,n_r$  are natural numbers, and,  $p_1$, $p_2$, $\cdots$, $p_r$ are distinct odd primes, which satisfy the following conditions: 
  \begin{enumerate}
      \item $gcd(p_i{}-1,{p_{i{'}}}-1)=2, \text{ for }~ 1\leq i\neq i{'} \leq r$
      \item $ {2}  \text{ is a primitive element of } \mathbb{Z}_{p_i^2}, ~ \text{ for all }~1\leq i \leq r. $
  \end{enumerate}
     Then the number of primitive idempotents of $\mathbb{F}_2G$ are $$2^{r-1}n_1n_2\dots n_r+\cdots+2^2 \cdot\sum_{1\leq i_1 < i_2 < i_3 \leq n_r} n_{i_1}n_{i_2}n_{i_3}+2\cdot \sum_{1\leq i_1 < i_2 \leq n_r} n_{i_1}n_{i_2}+\sum_{i=1}^rn_i+1.$$ 
    \begin{proof}
 Since $2$ is a primitive element of $\mathbb{Z}_{p_i^2}$, by [\cite{RFM}, Theorem $3.1$], we have
\begin{align*}
    \mathbb{F}_2C_{p_i^{n_i}}& = \mathbb{F}_2C_{p_i^{n_i}}\cdot \widehat{a_i}\oplus \bigoplus_{j_i=1}^{n_i} \mathbb{F}_2C_{p_i^{n_i}}\cdot (\widehat{a_i^{p_i^{j_i}}}-\widehat{a_i^{p_i^{j_i-1}}})\\&
    \cong \mathbb{F}_2 \oplus \bigoplus_{j_i=1}^{n_i}  \mathbb{F}_{{2}^{(p_{i}^{j_{i}}-p_{i}^{j_{i}-1})}}, \text{for} ~1\leq i \leq r.
\end{align*}
Since ${2}$ is a primitive element of $\mathbb{Z}_{p_i^{n_i}}$ and by [\cite{GRMC}, Lemma II.1], we have the following decomposition:
\begin{align*}
    \mathbb{F}_2G&=\mathbb{F}_2(C_{p_1^{n_1}}\times C_{p_2^{n_2}}\times \cdots \times  C_{p_r^{n_r}})=\mathbb{F}_2C_{p_1^{n_1}}\otimes \mathbb{F}_2 C_{p_2^{n_2}}\otimes \cdots \otimes \mathbb{F}_2 C_{p_r^{n_r}}\\
    & \cong (\mathbb{F}_2\oplus \bigoplus_{j_1=1}^{n_1} \mathbb{F}_{2^{(p_1^{j_1}-p_1^{j_1-1})}})\otimes (\mathbb{F}_2\oplus  \bigoplus_{j_2=1}^{n_2} \mathbb{F}_{2^{(p_2^{j_2}-p_2^{j_2-1})}}) \otimes \cdots \otimes (\mathbb{F}_2\oplus  \bigoplus_{j_r=1}^{n_r} \mathbb{F}_{2^{(p_r^{j_r}-p_r^{j_r-1})}})\\&
    =\mathbb{F}_2\oplus \bigoplus_{i_1=1}^r \bigoplus_{j_{i_1}=1}^{n_{i_1}} \mathbb{F}_{2^{(p_{i_1}^{j_{i_1}}-p_i^{j_{i_1}-1})}} \oplus  2\cdot \bigoplus_{1= i_1 < i_2}^ r \bigoplus_{j_{i_1}=1}^{n_{i_1}} \bigoplus_{j_{i_2}=1}^{n_{i_2}}   \mathbb{F}_{2^{\frac{(p_{i_1}^{j_{i_1}}-p_{i_1}^{j_{i_1}-1})(p_{i_2}^{j_{i_2}}-p_{i_2}^{j_{i_2}-1})}{2}}} \\&\quad {2^2}\cdot \oplus \bigoplus_{1= i_1 < i_2 <i_3}^ r\bigoplus_{j_{i_1}=1}^{n_{i_1}}\bigoplus_{j_{i_2}=1}^{n_{i_2}}\bigoplus_{j_{i_3}=1}^{n_{i_3}}   \mathbb{F}_{2^{\frac{(p_{i_1}^{j_{i_1}}-p_{i_1}^{j_{i_1}-1})(p_{i_2}^{j_{i_2}}-p_{i_2}^{j_{i_2}-1})(p_{i_3}^{j_{i_3}}-p_{i_3}^{j_{i_3}-1})}{4}}} \oplus \cdots  \\&\quad  {2^{r-1}}\cdot \oplus  \bigoplus_{j_{i_1}=1}^{n_{i_1}} \cdots \bigoplus_{j_{i_r}=1}^{n_{i_r}} \mathbb{F}_{2^{\frac{(p_1^{j_1}-p_1^{j_1-1})(p_2^{j_2}-p_2^{j_2-1})\cdots (p_r^{j_r}-p_r^{j_r-1})}{2^{r-1}}}},
    \end{align*}  where $n\cdot \mathbb{F}$ denotes the direct sum of $n$ copies of $\mathbb{F}$.
    Therefore, there exist $$2^{r-1}n_1n_2\dots n_r+\cdots+2^2\cdot \sum_{1\leq i_1 < i_2 < i_3 \leq n_r} n_{i_1}n_{i_2}n_{i_3}+2\cdot \sum_{1\leq i_1 < i_2 \leq n_r} n_{i_1}n_{i_2}+\sum_{i=1}^rn_i+1$$  simple components in the above decomposition.
    Since the number of simple components is equal to the number of primitive idempotents (see Proposition 3.6.3, \cite{CSK}), this completes the proof.\qed 
    \vspace{.25cm}\\
   We explicitly compute some of the primitive idempotents and their corresponding simple component, which will be required in the next section. For the sake of convenience, we define $$\overline{e}_{0\cdots{{j_{i_1}}}\cdots{{j_{i_2}}} \cdots{{j_{i_k}}}\cdots 0}=\widehat{\overline{a}_1}\cdots\widehat{\overline{a}_{{i_1}-1}}(\widehat{\overline{a}_{i_1}^{p^{j_{i_1}}_{i_1}}}-\widehat{\overline{a}_{i_1}^{p^{j_{i_1}-1}_{i_1}}})\widehat{\overline{a}_{{i_1}+1}}\cdots(\widehat{\overline{a}_{i_2}^{p^{j_{i_2}}_{i_2}}}-\widehat{\overline{a}_{i_2}^{p^{j_{i_2}-1}_{i_2}}})\cdots (\widehat{\overline{a}_{i_k}^{p^{j_{i_k}}_{i_k}}}-\widehat{\overline{a}_{i_l}^{p^{j_{i_l}-1}_{i_l}}})\cdots\widehat{\overline{a}_r},$$ for $1\leq j_{i_l}\leq {n_{i_l}}$ for all $1\leq i_l \leq r$. \\
    One can observe that $\overline{e}_{0\cdots{{j_{i_1}}}\cdots{{j_{i_2}}} \cdots{{j_{i_k}}}\cdots 0}$ is an idempotent of $\mathbb{F}_2G.$
    Now, we construct the primitive idempotents of $\mathbb{F}_{2}G$. Firstly, consider $\overline{e}_{0\cdots 0}=\widehat{\overline{a}_1}~\widehat{\overline{a}_2}\cdots \widehat{\overline{a}_r}$. Since we are in the field $\mathbb{F}_2$, $\widehat{\overline{a}_1}~\widehat{\overline{a}_2}\cdots \widehat{\overline{a}_r}=\widehat{G}$ is clearly an idempotent of $\mathbb{F}_2G.$. Further,
    \begin{align*}
         \mathbb{F}_2G\cdot \widehat{G}\cong \mathbb{F}_2(G/G)\cong \mathbb{F}_2
    \end{align*}
        Hence, $\overline{e}_{0\cdots0}$ is a primitive idempotent of $\mathbb{F}_2 G$.\\
        Next, we consider the element $\overline{e}_{0\cdots{{j_{i_1}}}\cdots0}$. 
        \begin{align*}
            \mathbb{F}_2G\overline{e}_{0\cdots{{j_{i_1}}}\cdots0}\cong\mathbb{F}_2 C_{p_{i_1}^{n_{i_1}}}\cdot (\widehat{\overline{a}_{i_1}^{p_{i_1}^{j_{i_1}}}}-\widehat{\overline{a}_{i_1}^{p_{i_1}^{j_{i_1}-1}}})\cong \mathbb{F}_{2^{p_{i_1}^{j_{i_1}}-p_{i_1}^{j_{i_1}-1}}},
        \end{align*}
       using Theorem \ref{xy}\ . Therefore, $\overline{e}_{0\cdots{{j_{i_1}}}\cdots0}$ is a primitive idempotent of $\mathbb{F}_2G$.\\
       For the idempotent $\overline{e}_{0\cdots{{j_{i_1}}}\cdots{{j_{i_2}}}\cdots 0}$, we proceed along similar lines as in Theorem 4.10 of \cite{GRMC} and observe the corresponding component 
       \begin{align*}
         \mathbb{F}_2 G\overline{e}_{0\cdots{{j_{i_1}}}\cdots{{j_{i_2}}}\cdots 0}&\cong \mathbb{F}_2 G \overline{e}_{0\cdots{{j_{i_1}}}\cdots{{j_{i_2}}}\cdots 0}^{(1)} \oplus \mathbb{F}_2 G \overline{e}_{0\cdots{{j_{i_1}}}\cdots{{j_{i_2}}}\cdots 0}^{(2)},\\&\cong 2\cdot  \mathbb{F}_{2^{\frac{(p_{i_1}^{j_{i_1}}-p_{i_1}^{j_{i_1}-1})(p_{i_2}^{j_{i_2}}-p_{i_2}^{j_{i_2}-1})}{2}}},
    \end{align*}
    where
    \begin{align*}
        \overline{e}_{0\cdots{{j_{i_1}}}\cdots{{j_{i_2}}}\cdots 0}^{(1)} =&\widehat{\overline{a}_1}\cdots(\widehat{\overline{a}_{i_1}^{p^{j_{i_1}}_{i_1}}}-\widehat{\overline{a}_{i_1}^{p^{j_{i_1}-1}_{i_1}}})\cdots(\widehat{\overline{a}_{i_2}^{p^{j_{i_2}}_{i_2}}}-\widehat{\overline{a}_{i_2}^{p^{j_{i_2}-1}_{i_2}}})\cdots \widehat{\overline{a}_r}\\&+\widehat{\overline{a}_1}\widehat{\overline{a}_2}\cdots \widehat{\overline{a}_{i_1-1}}\widehat{\overline{a}_{{i_1}+1}}\cdots \cdots \widehat{\overline{a}_{i_2-1}}\widehat{\overline{a}_{i_2+1}} \cdots \widehat{\overline{a}_r}(\overline{u}_{i_1}\overline{u}_{i_2}+\overline{u}_{i_1}^2\overline{u}_{i_2}^2)
    \end{align*}
    and 
    \begin{align*}
        \overline{e}_{0\cdots{{j_{i_1}}}\cdots{{j_{i_2}}}\cdots 0}^{(2)} =&\widehat{\overline{a}_1}\cdots(\widehat{\overline{a}_{i_1}^{p^{j_{i_1}}_{i_1}}}-\widehat{\overline{a}_{i_1}^{p^{j_{i_1}-1}_{i_1}}})\cdots(\widehat{\overline{a}_{i_2}^{p^{j_{i_2}}_{i_2}}}-\widehat{\overline{a}_{i_2}^{p^{j_{i_2}-1}_{i_2}}})\cdots \widehat{\overline{a}_r}\\&+\widehat{\overline{a}_1}\widehat{\overline{a}_2}\cdots \widehat{\overline{a}_{i_1-1}}\widehat{\overline{a}_{{i_1}+1}}\cdots \cdots \widehat{\overline{a}_{i_2-1}}\widehat{\overline{a}_{i_2+1}} \cdots \widehat{\overline{a}_r}(\overline{u}_{i_1}^2\overline{u}_{i_2}+\overline{u}_{i_1}\overline{u}_{i_2}^2)
    \end{align*}
    with $\overline{u}_{i_1},  \overline{u}_{i_1}^2, $ being defined as 
\[
\overline{u}_{i_1} =
\begin{cases}
\widehat{\overline{a}_{i_1}^{p_{i_1}^{j_{i_1}}}}(\overline{a}_{i_1}^{{2^0}{p_{i_1}^{j_{i_1}-1}}} + \overline{a}_{i_1}^{2^2p_{i_1}^{j_{i_1}-1}} + \cdots + \overline{a}_{i_1}^{2^{{p_{i_1}}-3}p_{i_1}^{j_{i_1}-1}}), & \text{if } p_{i_1} \equiv 1 \pmod{4},~or \\
\widehat{\overline{a}_{i_1}^{p_{i_1}^{j_{i_1}}}}(1+\overline{a}_{i_1}^{{2^0}{p_{i_1}^{j_{i_1}-1}}} + \overline{a}_{i_1}^{2^2p_{i_1}^{j_{i_1}-1}} + \cdots + \overline{a}_{i_1}^{2^{{p_{i_1}}-3}p_{i_1}^{j_{i_1}-1}}), & \text{if } p_{i_1} \equiv 3 \pmod{4},
\end{cases}
\]

\[
\overline{u}_{i_1}^{2} =
\begin{cases}
\widehat{\overline{a}_{i_1}^{p_{i_1}^{j_{i_1}}}}(\overline{a}_{i_1}^{{2}{p_{i_1}^{j_{i_1}-1}}} + \overline{a}_{i_1}^{2^3p_{i_1}^{j_{i_1}-1}} + \cdots + \overline{a}_{i_1}^{2^{{p_{i_1}}-2}p_{i_1}^{j_{i_1}-1}}), & \text{if } p_{i_1} \equiv 1 \pmod{4},~or \\
\widehat{\overline{a}_{i_1}^{p_{i_1}^{j_{i_1}}}}(1+\overline{a}_{i_1}^{{2}{p_{i_1}^{j_{i_1}-1}}} + \overline{a}_{i_1}^{2^3p_{i_1}^{j_{i_1}-1}} + \cdots + \overline{a}_{i_1}^{2^{{p_{i_1}}-2}p_{i_1}^{j_{i_1}-1}}), & \text{if } p_{i_1} \equiv 1 \pmod{4}
\end{cases}
\]
Similarly, $\overline{u}_{i_2}$ and $\overline{u}_{i_2}^2$ are defined as above by replacing $i_1$ with $i_2$.\\
 For the idempotent $\overline{e}_{0\cdots{{j_{i_1}}}\cdots{{j_{i_2}}} \cdots{{j_{i_3}}}\cdots 0}$ and for $i_1,i_2,i_3\in \{ 1,2,\cdots,r\}$, $j_{i_1},j_{i_2},j_{i_3}$ runs from $1$ to $n_{i_1}$,$1$ to $n_{i_2}$, $1$ to $n_{i_3}$ and all are pairwise coprime, we observe that
    \begin{align*}
    \mathbb{F}_2G \overline{e}_{0\cdots{{j_{i_1}}}\cdots{{j_{i_2}}} \cdots{{j_{i_3}}}\cdots 0} \cong& \mathbb{F}_2G \overline{e}_{0\cdots{{j_{i_1}}}\cdots{{j_{i_2}}} \cdots{{j_{i_3}}}\cdots 0}^{(1)} \oplus \mathbb{F}_2G \overline{e}_{0\cdots{{j_{i_1}}}\cdots{{j_{i_2}}} \cdots{{j_{i_3}}}\cdots 0}^{(2)}\\& \oplus \mathbb{F}_2G  \overline{e}_{0\cdots{{j_{i_1}}}\cdots{{j_{i_2}}} \cdots{{j_{i_3}}}\cdots 0}^{(3)}
    \oplus \mathbb{F}_2G \overline{e}_{0\cdots{{j_{i_1}}}\cdots{{j_{i_2}}} \cdots{{j_{i_3}}}\cdots 0}^{(4)}\\ \cong& 2^2 \cdot \mathbb{F}_{2^{\frac{(p_{i_1}^{j_{i_1}}-p_{i_1}^{j_{i_1}-1})(p_{i_2}^{j_{i_2}}-p_{i_2}^{j_{i_2}-1})(p_{i_3}^{j_{i_3}}-p_{i_3}^{j_{i_3}-1})}{4}}}
    \end{align*}
    where \begin{align*}
      \overline{e}_{0\cdots{{j_{i_1}}}\cdots{{j_{i_2}}} \cdots{{j_{i_3}}}\cdots 0}^{(1)} = &(\widehat{\overline{a}_1}\cdots(\widehat{\overline{a}_{i_1}^{p^{j_{i_1}}_{i_1}}}-\widehat{\overline{a}_{i_1}^{p^{j_{i_1}-1}_{i_1}}})\cdots(\widehat{\overline{a}_{i_2}^{p^{j_{i_2}}_{i_2}}}-\widehat{\overline{a}_{i_2}^{p^{j_{i_2}-1}_{i_2}}}) \cdots(\widehat{\overline{a}_{i_3}^{p^{j_{i_3}}_{i_3}}}-\widehat{\overline{a}_{i_3}^{p^{j_{i_3}-1}_{i_3}}}) \cdots\widehat{\overline{a}_r})+\\&\overline{u}_{i_1}\overline{u}_{i_2}\overline{u}_{i_3}+\overline{u}_{i_1}^2\overline{u}_{i_2}^2\overline{u}_{i_3}^2,\\
      \overline{e}_{0\cdots{{j_{i_1}}}\cdots{{j_{i_2}}} \cdots{{j_{i_3}}}\cdots 0}^{(2)} = &(\widehat{\overline{a}_1}\cdots(\widehat{\overline{a}_{i_1}^{p^{j_{i_1}}_{i_1}}}-\widehat{\overline{a}_{i_1}^{p^{j_{i_1}-1}_{i_1}}})\cdots(\widehat{\overline{a}_{i_2}^{p^{j_{i_2}}_{i_2}}}-\widehat{\overline{a}_{i_2}^{p^{j_{i_2}-1}_{i_2}}}) \cdots(\widehat{\overline{a}_{i_3}^{p^{j_{i_3}}_{i_3}}}-\widehat{\overline{a}_{i_3}^{p^{j_{i_3}-1}_{i_3}}}) \cdots\widehat{\overline{a}_r})+\\&\overline{u}_{i_1}\overline{u}_{i_2}^2\overline{u}_{i_3}+\overline{u}_{i_1}^2\overline{u}_{i_2}\overline{u}_{i_3}^2,\\
       \overline{e}_{0\cdots{{j_{i_1}}}\cdots{{j_{i_2}}} \cdots{{j_{i_3}}}\cdots 0}^{(3)} = &(\widehat{\overline{a}_1}\cdots(\widehat{\overline{a}_{i_1}^{p^{j_{i_1}}_{i_1}}}-\widehat{\overline{a}_{i_1}^{p^{j_{i_1}-1}_{i_1}}})\cdots(\widehat{\overline{a}_{i_2}^{p^{j_{i_2}}_{i_2}}}-\widehat{\overline{a}_{i_2}^{p^{j_{i_2}-1}_{i_2}}}) \cdots(\widehat{\overline{a}_{i_3}^{p^{j_{i_3}}_{i_3}}}-\widehat{\overline{a}_{i_3}^{p^{j_{i_3}-1}_{i_3}}}) \cdots\widehat{\overline{a}_r})+\\& \overline{u}_{i_1}\overline{u}_{i_2}\overline{u}_{i_3}^2+\overline{u}_{i_1}^2\overline{u}_{i_2}^2\overline{u}_{i_3},\\
         \overline{e}_{0\cdots{{j_{i_1}}}\cdots{{j_{i_2}}} \cdots{{j_{i_3}}}\cdots 0}^{(4)} = & (\widehat{\overline{a}_1}\cdots(\widehat{\overline{a}_{i_1}^{p^{j_{i_1}}_{i_1}}}-\widehat{\overline{a}_{i_1}^{p^{j_{i_1}-1}_{i_1}}})\cdots(\widehat{\overline{a}_{i_2}^{p^{j_{i_2}}_{i_2}}}-\widehat{\overline{a}_{i_2}^{p^{j_{i_2}-1}_{i_2}}}) \cdots(\widehat{\overline{a}_{i_3}^{p^{j_{i_3}}_{i_3}}}-\widehat{\overline{a}_{i_3}^{p^{j_{i_3}-1}_{i_3}}}) \cdots\widehat{\overline{a}_r})+\\& \overline{u}_{i_1}^2\overline{u}_{i_2}\overline{u}_{i_3}+\overline{u}_{i_1}\overline{u}_{i_2}^2\overline{u}_{i_3}^2,
    \end{align*}
    where $\overline{u}_{i_1}, \overline{u}_{i_1}^2, \overline{u}_{i_2}, \overline{u}_{i_2}^2, \overline{u}_{i_3}$ and $\overline{u}_{i_3}^2$ can be similarly obtained as we defined for $\overline{e}_{0\cdots j_{i_1}\cdots j_{i_2}\cdots 0}.$\\
    As $\overline{e}_{j_1\cdots j_r}$ can be written as sum of $2^{r-1}$ primitive idempotents, and we can compute some of the primitive idempotents which are sufficient to prove our main theorems in sections $3$ and $4$.
    Along similar lines, we look at the simple components of 
    \begin{align*}
        \mathbb{F}_2C_{p_1^{n_1}}(\widehat{\overline{a}_{1}^{p^{j_{1}}_{1}}}-\widehat{\overline{a}_{1}^{p^{j_{1}-1}_{1}}})\otimes \mathbb{F}_2C_{p_2^{n_2}}(\widehat{\overline{a}_{2}^{p^{j_{2}}_{2}}}-\widehat{\overline{a}_{2}^{p^{j_{2}-1}_{2}}}) \otimes \cdots \otimes \mathbb{F}_2C_{p_r^{n_r}}(\widehat{\overline{a}_{r}^{p^{j_r}_r}}-\widehat{\overline{a}_{r}^{p^{j_r-1}_r}}).
    \end{align*}
    Let $0\neq \overline{u}_i\in \mathbb{F}_2C_{p_i^{n_i}}(\widehat{\overline{a}_{i}^{p^{j_{i}}_{i}}}-\widehat{\overline{a}_{i}^{p^{j_{i}-1}_{i}}})$ be an element such that $\overline{u}_i^3=(\widehat{\overline{a}_{i}^{p^{j_{i}}_{i}}}-\widehat{\overline{a}_{i}^{p^{j_{i}-1}_{i}}})$ and $\overline{u}_i\neq (\widehat{\overline{a}_{i}^{p^{j_{i}}_{i}}}-\widehat{\overline{a}_{i}^{p^{j_{i}-1}_{i}}}) $, where $1\leq i \leq r$ (for more details, see \cite{GRMC}).\par 
    In $\mathbb{F}_2C_{p_{i_1}^{n_{i_1}}}(\widehat{\overline{a}_{i_1}^{p^{{j_{i_1}}}_{i_1}}}-\widehat{\overline{a}_{i_1}^{p^{{j_{i_1}-1}}_{i_1}}})\otimes ~\mathbb{F}_2C_{p_{i_2}^{n_{i_2}}}(\widehat{\overline{a}_{i_2}^{p^{{j_{i_2}}}_{i_2}}}-\widehat{\overline{a}_{i_2}^{p^{{j_{i_2}-1}}_{i_2}}})$, we have $(\overline{u}_{i_1}\overline{u}_{i_2})^3=(\widehat{\overline{a}_{i_1}^{p^{{j_{i_1}}}_{i_1}}}-\widehat{\overline{a}_{i_1}^{p^{{j_{i_1}-1}}_{i_1}}})(\widehat{\overline{a}_{i_2}^{p^{{j_{i_2}}}_{i_2}}}-\widehat{\overline{a}_{i_2}^{p^{{j_{i_2}-1}}_{i_2}}})$. Let $\overline{\alpha}=\overline{u}_{i_1}\overline{u}_{i_2}(\overline{u}_{i_1}\overline{u}_{i_2}+\overline{u}_{i_1}^2\overline{u}_{i_2}^2)=\overline{u}_{i_1}\overline{u}_{i_2}+\overline{u}_{i_1}^2\overline{u}_{i_2}+\overline{u}_{i_1}\overline{u}_{i_2}^2$. Similarly, $(\overline{u}_{i_1}^2\overline{u}_{i_2}^2)^3=(\widehat{\overline{a}_{i_1}^{p^{{j_{i_1}}}_{i_1}}}-\widehat{\overline{a}_{i_1}^{p^{{j_{i_1}-1}}_{i_1}}})(\widehat{\overline{a}_{i_2}^{p^{{j_{i_2}}}_{i_2}}}-\widehat{\overline{a}_{i_2}^{p^{{j_{i_2}-1}}_{i_2}}})$ and $\overline{\alpha}^2=\overline{u}_{i_1}^2\overline{u}_{i_2}^2(\overline{u}_{i_1}\overline{u}_{i_2}+\overline{u}_{i_1}^2\overline{u}_{i_2}^2)=\overline{u}_{i_1}\overline{u}_{i_2}^2+\overline{u}_{i_1}^2\overline{u}_{i_2}^2+\overline{u}_{i_1}\overline{u}_{i_2}$.
    Define 
    \begin{align*}A_{i_1} & =\overline{\alpha} ~\overline{w}+\overline{\alpha}^2~\overline{ w}^2=(\overline{u}_{i_1}\overline{u}_{i_2}+\overline{u}_{i_1}^2 \overline{u}_{i_2}+\overline{u}_{i_1}\overline{u}_{i_2}^2)^2(\overline{u}_1 \overline{u}_2\cdots {\overline{u}_{i_1-1}}{\overline{u}_{{i_1}+1}}\cdots {\overline{u}_{i_2-1}}{\overline{u}_{i_2+1}}\cdots \overline{u}_r)\\&+(\overline{u}_{i_1}\overline{u}_{i_2}^2+\overline{u}_{i_1}^2 \overline{u}_{i_2}^2+ \overline{u}_{i_1} \overline{u}_{i_2})(\overline{u}_1 \overline{u}_2\cdots {\overline{u}_{i_1-1}}{\overline{u}_{{i_1}+1}}\cdots \cdots {\overline{u}_{i_2-1}}{\overline{u}_{i_2+1}}\cdots \overline{u}_r)^2\\&=(\widehat{\overline{a}_1^{p_1^{j_1}}}-\widehat{\overline{a}_1^{p_1^{j_1-1}}})(\widehat{\overline{a}_2^{p_2^{j_2}}}-\widehat{\overline{a}_2^{p_2^{j_2-1}}})\cdots (\widehat{\overline{a}_r^{p_r^{j_r}}}-\widehat{\overline{a}_r^{p_r^{j_r-1}}})+(\overline{u}_1 \overline{u}_2\cdots {\overline{u}_{i_1-1}}\overline{u}_{i_1}^2{\overline{u}_{{i_1}+1}}\cdots  {\overline{u}_{i_2-1}}\overline{u}_{i_2}^2{u_{i_2+1}}\cdots \overline{u}_r)\\&+ (\overline{u}_1^2 \overline{u}_2^2\cdots {\overline{u}^2_{i_1-1}}\overline{u}_{i_1}{\overline{u}^2_{{i_1}+1}}\cdots  {{\overline{u}^2_{i_2-1}}} {\overline{u}_{i_2}}{\overline{u}^2_{i_2+1}}\cdots {\overline{u}_r^2}).
    \end{align*}
    \begin{align*}
         A_{i_2}&=\overline{\alpha}^2~\overline{w}+\overline{\alpha}~ \overline{w}^2\\&=(\overline{u}_{i_1}\overline{u}_{i_2}^2+\overline{u}_{i_1}^2 \overline{u}_{i_2}^2+\overline{u}_{i_1}\overline{u}_{i_2})(\overline{u}_1 \overline{u}_2\cdots {\overline{u}_{i_1-1}}{\overline{u}_{{i_1}+1}} \cdots {\overline{u}_{i_2-1}}{\overline{u}_{i_2+1}}\cdots \overline{u}_r)\\& + (\overline{u}_{i_1}\overline{u}_{i_2}+\overline{u}_{i_1}^2 \overline{u}_{i_2}+\overline{u}_{i_1}\overline{u}_{i_2}^2)(\overline{u}_1 \overline{u}_2\cdots {\overline{u}_{i_1-1}}{\overline{u}_{{i_1}+1}}\cdots {\overline{u}_{i_2-1}}{\overline{u}_{i_2+1}}\cdots \overline{u}_r)^2\\&= (\widehat{\overline{a}_1^{p_1^{j_1}}}-\widehat{\overline{a}_1^{p_1^{j_1-1}}})(\widehat{\overline{a}_2^{p_2^{j_2}}}-\widehat{\overline{a}_2^{p_2^{j_2-1}}})\cdots (\widehat{\overline{a}_r^{p_r^{j_r}}}-\widehat{\overline{a}_r^{p_r^{j_r-1}}})+(\overline{u}_1^2 \overline{u}_2^2\cdots {\overline{u}^2_{i_1-1}}\overline{u}_{i_1}^2{\overline{u}^2_{{i_1}+1}}\cdots  {{\overline{u}^2_{i_2-1}}} \overline{u}_{i_2}^2{\overline{u}^2_{i_2+1}}\cdots \overline{u}_r^2)\\&+ (\overline{u}_1 \overline{u}_2\cdots {\overline{u}_{i_1-1}}\overline{u}_{i_1}{\overline{u}_{{i_1}+1}}\cdots  {{\overline{u}_{i_2-1}}} \overline{u}_{i_2}{\overline{u}_{i_2+1}}\cdots \overline{u}_r).
    \end{align*} where $\overline{w}=\overline{u}_1 \overline{u}_2\cdots {\overline{u}_{i_1-1}}\overline{u}_{i_1}{\overline{u}_{{i_1}+1}}\cdots  {{\overline{u}_{i_2-1}}} \overline{u}_{i_2}{\overline{u}_{i_2+1}}\cdots \overline{u}_r$.\\
    First, we show that $A_{i_1}$ and $A_{i_2}$ are primitive idempotents of $$\mathbb{F}_2[C_{p_{i_1}^{n_{i_1}}}\times C_{p_{i_2}^{n_{i_2}}}](\overline{u}_{i_1}\overline{u}_{i_2}+\overline{u}_{i_1}^2\overline{u}_{i_2}^2) \otimes \mathbb{F}_2C_{p_1^{n_1}} (\widehat{\overline{a}_{1}^{p^{{j_{1}}}_{1}}}-\widehat{\overline{a}_{1}^{p^{{j_{1}-1}}_{1}}})\otimes \cdots \otimes \mathbb{F}_2C_{p_r^{n_r}} (\widehat{\overline{a}_{r}^{p^{{j_{r}}}_{r}}}-\widehat{\overline{a}_{r}^{p^{{j_{r}-1}}_{r}}}) .$$\\
    Observe that \begin{align*}
        A_{i_1}^2&=(\overline{\alpha}~ \overline{w}+\overline{\alpha}^2\overline{w}^2)(\overline{\alpha}~\overline{ w}+\overline{\alpha}^2\overline{w}^2)\\&
        =\overline{\alpha}^2\overline{w}^2+\overline{\alpha}^3\overline{w}^3+\overline{\alpha}^3\overline{w}^3+\overline{\alpha}^4\overline{w}^4=\overline{\alpha}~\overline{ w}+\overline{\alpha}^2\overline{w}^2=A_{i_1}.
    \end{align*} 
    Further, 
\begin{align*}
  \mathbb{F}_2G\overline{e}_{j_1\cdots j_r}&\cong \bigoplus_{i=1}^{2^{r-1}} \mathbb{F}_2G\overline{e}_{j_1\cdots j_r}^{(i)} \cong 2^{r-1} \cdot \mathbb{F}_{2^{\frac{(p_1^{j_1}-p_1^{j_1-1})\cdots (p_r^{j_r}-p_r^{j_r-1})}{2^{r-1}}}},
\end{align*}
which implies that $A_{i_1}$ and $A_{i_2}$ are primitive idempotents.
Define \begin{align*}B_{i_1}&=\overline{\beta}~ \overline{w}+\overline{\beta}^2\overline{w}^2\\&=(\overline{u}_{i_1}\overline{u}_{i_2}+\overline{u}_{i_1}^2\overline{u}_{i_2}+\overline{u}_{i_1}\overline{u}_{i_2}^2)^2(\overline{u}_1 \overline{u}_2\cdots {\overline{u}_{i_1-1}}{\overline{u}_{{i_1}+1}}\cdots {\overline{u}_{i_2-1}}{\overline{u}_{i_2+1}}\cdots \overline{u}_r)\\&+(\overline{u}_{i_1}\overline{u}_{i_2}^2+\overline{u}_{i_1}^2\overline{u}_{i_2}^2+\overline{u}_{i_1}\overline{u}_{i_2})(\overline{u}_1 \overline{u}_2\cdots {\overline{u}_{i_1-1}}{\overline{u}_{{i_1}+1}}\cdots \cdots {\overline{u}_{i_2-1}}{\overline{u}_{i_2+1}}\cdots \overline{u}_r)^2\\&=(\widehat{\overline{a}_1^{p_1^{j_1}}}-\widehat{\overline{a}_1^{p_1^{j_1-1}}})(\widehat{\overline{a}_2^{p_2^{j_2}}}-\widehat{\overline{a}_2^{p_2^{j_2-1}}})\cdots (\widehat{\overline{a}_r^{p_r^{j_r}}}-\widehat{\overline{a}_r^{p_r^{j_r-1}}})+(\overline{u}_1 \overline{u}_2\cdots {\overline{u}_{i_1-1}}\overline{u}_{i_1}^2{\overline{u}_{{i_1}+1}}\cdots  {\overline{u}_{i_2-1}}\overline{u}_{i_2}{\overline{u}_{i_2+1}}\cdots \overline{u}_r)\\&+ (\overline{u}_1^2 \overline{u}_2^2\cdots {\overline{u}^2_{i_1-1}}\overline{u}_{i_1}{\overline{u}^2_{{i_1}+1}}\cdots  {{\overline{u}^2_{i_2-1}}} \overline{u}_{i_2}^2{\overline{u}^2_{i_2+1}}\cdots \overline{u}_r^2).\\
         B_{i_2}&=\overline{\beta}^2 \overline{w}+\overline{\beta} \overline{w}^2\\&=(\overline{u}_{i_1}\overline{u}_{i_2}^2+\overline{u}_{i_1}^2\overline{u}_{i_2}^2+\overline{u}_{i_1}\overline{u}_{i_2})(\overline{u}_1 \overline{u}_2\cdots {\overline{u}_{i_1-1}}{\overline{u}_{{i_1}+1}} \cdots {\overline{u}_{i_2-1}}{\overline{u}_{i_2+1}}\cdots \overline{u}_r)\\& + (\overline{u}_{i_1}\overline{u}_{i_2}+\overline{u}_{i_1}^2\overline{u}_{i_2}+\overline{u}_{i_1}\overline{u}_{i_2}^2)(\overline{u}_1 \overline{u}_2\cdots {\overline{u}_{i_1-1}}{\overline{u}_{{i_1}+1}}\cdots {\overline{u}_{i_2-1}}{\overline{u}_{i_2+1}}\cdots \overline{u}_r)^2\\&= (\widehat{\overline{a}_1^{p_1^{j_1}}}-\widehat{\overline{a}_1^{p_1^{j_1-1}}})(\widehat{\overline{a}_2^{p_2^{j_2}}}-\widehat{\overline{a}_2^{p_2^{j_2-1}}})\cdots (\widehat{\overline{a}_r^{p_r^{j_r}}}-\widehat{\overline{a}_r^{p_r^{j_r-1}}})+(\overline{u}_1^2 \overline{u}_2^2\cdots {\overline{u}^2_{i_1-1}}\overline{u}_{i_1}^2{\overline{u}^2_{{i_1}+1}}\cdots  {{\overline{u}^2_{i_2-1}}} \overline{u}_{i_2}{\overline{u}^2_{i_2+1}}\cdots \overline{u}_r^2)\\&+ (\overline{u}_1 \overline{u}_2\cdots {\overline{u}_{i_1-1}}\overline{u}_{i_1}{\overline{u}_{{i_1}+1}}\cdots  {{\overline{u}_{i_2-1}}} \overline{u}_{i_2}^2{\overline{u}_{i_2+1}}\cdots \overline{u}_r).
    \end{align*}
Using similar process, we can show that $B_{i_1}=\overline{\beta} \overline{w}+\overline{\beta}^2 \overline{w}^2$ and $B_{i_2}=\overline{\beta}^2 \overline{w}+\overline{\beta} \overline{w}^2,$ where $\beta=\overline{u}_{i_1}\overline{u}_{i_2}^2+\overline{u}_{i_1}^2\overline{u}_{i_2}^2+\overline{u}_{i_1}\overline{u}_{i_2},$ are primitive idempotents of $\mathbb{F}_2[C_{p_{i_1}^{n_{i_1}}}\times C_{p_{i_2}^{n_{i_2}}}](u_{i_1}u_{i_2}+u_{i_1}^2u_{i_2}^2) \otimes \mathbb{F}_2C_{p_1^{n_1}} (\widehat{\overline{a}_{1}^{p^{{j_{1}}}_{1}}}-\widehat{\overline{a}_{1}^{p^{{j_{1}-1}}_{1}}})\otimes \cdots \otimes \mathbb{F}_2C_{p_r^{n_r}} (\widehat{\overline{a}_{r}^{p^{{j_{r}}}_{r}}}-\widehat{\overline{a}_{r}^{p^{{j_{r}-1}}_{r}}}) .$\\
Next, we show that $\{A_{i_1},~A_{i_2},~B_{i_1},~B_{i_2}\}$ is a set of orthogonal idempotents. For this, we see that
\begin{align*}
   A_{i_1}\cdot A_{i_2}&=(\overline{\alpha}~\overline{w}+\overline{\alpha}~\overline{ w}^2)\cdot (\overline{\alpha}^2 ~\overline{w}+\overline{\alpha} ~\overline {w}^2)=\overline{\alpha}^3~\overline{ w}^2+\overline{\alpha}^2 \overline{w}^3+\overline{\alpha}^4 \overline{w}^3+\overline{\alpha}^3 \overline{w}^4\\&=\overline{\alpha}^3~(\overline{w}+\overline{w}^2)+(\overline{\alpha}^2+\overline{\alpha})~\overline{w}^3=2\cdot (\widehat{\overline{a}_1^{p_1^{j_1}}}-\widehat{\overline{a}_1^{p_1^{j_1-1}}}) \cdots (\widehat{\overline{a}_r^{p_r^{j_r}}}-\widehat{\overline{a}_r^{p_r^{j_r-1}}})=0.
\end{align*}
Similarly, we can prove for others.\\
    \end{proof}
 \end{theorem}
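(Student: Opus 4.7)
The approach is to decompose $\mathbb{F}_2 G$ as an iterated tensor product over the Sylow $p_i$-components of $G$, apply Theorem~\ref{xy} to each cyclic factor, and then count simple summands in the resulting expansion; since $\mathbb{F}_2 G$ is commutative and semisimple, this count equals the number of primitive idempotents (Proposition~3.6.3 of \cite{CSK}).

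First, since $2$ is primitive modulo $p_i^2$ by hypothesis~(2), an elementary lifting argument shows $2$ remains primitive modulo $p_i^{n_i}$ for every $n_i\geq 1$; thus Theorem~\ref{xy} applies with $K=\mathbb{F}_2$ to each Sylow factor, yielding
\begin{equation*}
\mathbb{F}_2 C_{p_i^{n_i}} \;\cong\; \mathbb{F}_2 \;\oplus\; \bigoplus_{j=1}^{n_i}\mathbb{F}_{2^{\,p_i^{j}-p_i^{j-1}}}.
\end{equation*}
Because the $p_i^{n_i}$ are pairwise coprime, $\mathbb{F}_2 G \cong \bigotimes_{i=1}^{r}\mathbb{F}_2 C_{p_i^{n_i}}$. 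Distributing this tensor product over the direct sums above produces one tensor summand for each subset $S\subseteq\{1,\ldots,r\}$ together with a choice of exponent $j_i\in\{1,\ldots,n_i\}$ for each $i\in S$ (the complementary indices contribute the trivial $\mathbb{F}_2$ factor).

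Next, each such summand has the shape $\bigotimes_{i\in S}\mathbb{F}_{2^{\,p_i^{j_i}-p_i^{j_i-1}}}$. Using the identity $\mathbb{F}_{2^a}\otimes_{\mathbb{F}_2}\mathbb{F}_{2^b}\cong\gcd(a,b)\cdot\mathbb{F}_{2^{\mathrm{lcm}(a,b)}}$, which is essentially [GRMC, Lemma~II.1], I evaluate the iterated tensor product by induction on $|S|=k$. The gcd calculation rests on the factorization $p_i^{j_i}-p_i^{j_i-1}=p_i^{j_i-1}(p_i-1)$: the $p_i^{j_i-1}$ parts are pairwise coprime (distinct odd primes), and hypothesis~(1) $\gcd(p_i-1,p_{i'}-1)=2$ forces at each inductive step the running pairwise gcd to be exactly $2$. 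Consequently a $k$-fold tensor of these fields decomposes into $2^{k-1}$ isomorphic copies of a single field of degree $\prod_{i\in S}(p_i^{j_i}-p_i^{j_i-1})/2^{k-1}$, and therefore contributes $2^{k-1}$ simple summands. Summing over subsets: $|S|=0$ contributes one $\mathbb{F}_2$, and for $|S|=k\geq 1$ the contribution is $2^{k-1}\sum_{1\leq i_1<\cdots<i_k\leq r}n_{i_1}\cdots n_{i_k}$, reproducing the stated formula.

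The main obstacle is the inductive gcd step: one must verify that at every stage the gcd of the accumulated degree $\prod_{l<k}(p_{i_l}^{j_{i_l}}-p_{i_l}^{j_{i_l}-1})/2^{k-2}$ with the next degree $p_{i_k}^{j_{i_k}}-p_{i_k}^{j_{i_k}-1}$ is exactly $2$, not more. Tracking $2$-adic valuations, hypothesis~(1) guarantees at most one $p_i\equiv 1\pmod 4$ can appear among the $p_i$'s (otherwise two of the $p_i-1$ would be divisible by $4$, giving gcd at least $4$), and this is precisely what keeps the multiplicity doubling at each step of the induction. The rest of the argument is bookkeeping on the binomial-type sum of subset contributions.
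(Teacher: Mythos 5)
Your proposal follows essentially the same route as the paper's proof: decompose each $\mathbb{F}_2 C_{p_i^{n_i}}$ via Theorem~\ref{xy}, write $\mathbb{F}_2G$ as the tensor product of these group algebras over the coprime Sylow factors, expand using $\mathbb{F}_{2^a}\otimes_{\mathbb{F}_2}\mathbb{F}_{2^b}\cong \gcd(a,b)\cdot\mathbb{F}_{2^{\operatorname{lcm}(a,b)}}$ (the paper's [GRMC, Lemma~II.1]), and count the resulting simple components, equating that count with the number of primitive idempotents. Your handling of the inductive gcd step is in fact slightly more explicit than the paper's (which simply asserts the multiplicities $2^{k-1}$ in its displayed decomposition), though both arguments tacitly reduce $\gcd\bigl(p_i^{j_i-1}(p_i-1),\,p_{i'}^{j_{i'}-1}(p_{i'}-1)\bigr)$ to $\gcd(p_i-1,p_{i'}-1)=2$ without addressing the possibility that $p_i$ divides $p_{i'}-1$.
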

 \begin{remark}
    \rm{(\cite[Remark $2.2$]{HMR})} Let $R$ be a ring and $N$ be a nil ideal. Then the following hold:\\
 $1.$ If $\overline{e_1}$ and $\overline{e_2}$ are orthogonal idempotents of $R/N,$ then the corresponding lifted idempotents $e_1$ and $e_2$ of $R$ are also orthogonal.\\
 $2.$ If $\overline{e}\in R/N$ is a primitive idempotent, then the corresponding lifted idempotent $e \in R$ is also primitive.
 \end{remark}
 \begin{theorem} \rm{(\cite[Corollary $4.8$]{HMR})}
    Let R be a commutative ring, $G$ be a commutative group, a be a nilpotent element of index $k$ in $R$, and s be the characteristic of the quotient ring $R/\langle a \rangle$. If $f+\langle a \rangle G$ is an idempotent element of the group ring $(R/\langle a \rangle)G,$ then $f^{s^{k-1}}$ is an idempotent element of the group ring $RG.$ 
 \end{theorem}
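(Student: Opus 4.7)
The plan is to unpack the idempotency hypothesis modulo the nil ideal $\langle a\rangle G\subseteq RG$ and then bootstrap the resulting congruence $f^{2}\equiv f$ into an exact identity by raising $f$ to the power $s^{k-1}$. The hypothesis says $f^{2}-f\in\langle a\rangle G$, so I fix $h\in RG$ with $f^{2}=f+a h$. The characteristic assumption gives $s\cdot 1_R\in\langle a\rangle$, i.e.\ $s\cdot 1_R=a r_{0}$ for some $r_{0}\in R$; iterating, $s^{j}\cdot 1_R=(s\cdot 1_R)^{j}\in\langle a^{j}\rangle$ for every $j\geq 0$.

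Set $m=s^{k-1}$. Since $R$ is commutative and $G$ is abelian, $RG$ is commutative and the binomial theorem applies cleanly, giving
\[
f^{2m}=(f+a h)^{m}=f^{m}+\sum_{i=1}^{m}\binom{m}{i}\,a^{i}\,h^{i}f^{m-i}.
\]
Thus proving $f^{2m}=f^{m}$ reduces to showing $\binom{m}{i}\,a^{i}=0$ in $R$ for each $1\leq i\leq m$. For this I will use the identity $i\binom{m}{i}=m\binom{m-1}{i-1}$, which together with $v_s(m)=k-1$ yields $v_s\!\bigl(\binom{m}{i}\bigr)\geq k-1-v_s(i)$. Combined with $s^{j}\cdot 1_R\in\langle a^{j}\rangle$, this places $\binom{m}{i}\,a^{i}\in\langle a^{k-1-v_s(i)+i}\rangle$; the elementary inequality $i\geq v_s(i)+1$ (from $s^{v_s(i)}\leq i$ and $s^{j}\geq j+1$ for $s\geq 2$, $j\geq 0$) forces the exponent of $a$ to be at least $k$, so the term vanishes because $a^{k}=0$.

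The only non-routine step is the divisibility bound $v_s\!\bigl(\binom{s^{k-1}}{i}\bigr)\geq k-1-v_s(i)$; the rest is a direct expansion and careful accounting of the $a$-adic valuations. Once that bound is in hand, every summand vanishes, giving $f^{2m}=f^{m}$, i.e.\ $(f^{s^{k-1}})^{2}=f^{s^{k-1}}$, which is the claimed idempotency of $f^{s^{k-1}}$ in $RG$.
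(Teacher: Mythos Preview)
The paper does not supply a proof of this statement; it is quoted from \cite{HMR} and invoked as a black box to lift the idempotents computed over $\mathbb{F}_2G$ up to $RG$ in the proof of Theorem~\ref{4}. Your argument is correct when $s$ is prime---which is the only case the paper needs, since there $R/M\cong\mathbb{F}_2$ and $s=2$---and the strategy (write $f^{2}=f+ah$, expand $(f+ah)^{s^{k-1}}$ binomially, and annihilate each cross term by combining a divisibility estimate on $\binom{m}{i}$ with $s^{j}\cdot 1_R\in\langle a^{j}\rangle$) is the natural one.

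One caveat if you want the theorem in its full stated generality: your inequality $v_s\bigl(\binom{m}{i}\bigr)\geq k-1-v_s(i)$ is obtained from $i\binom{m}{i}=m\binom{m-1}{i-1}$ via the tacit identity $v_s(xy)=v_s(x)+v_s(y)$. That identity holds for prime $s$ but can fail when $s$ is composite (for instance $v_6(4)+v_6(9)=0$ while $v_6(36)=2$), and the hypothesis allows the characteristic of $R/\langle a\rangle$ to be any positive integer. The repair is routine: run the same argument for each prime $p\mid s$, where additivity does hold, to get $v_p\bigl(\binom{m}{i}\bigr)\geq (k-1)v_p(s)-v_p(i)$; since $v_p(i)\leq i-1$ and $v_p(s)\geq 1$, this yields $v_p\bigl(\binom{m}{i}\bigr)\geq (k-i)v_p(s)$ for every such $p$, hence $s^{\,k-i}\mid\binom{m}{i}$ whenever $1\leq i\leq k-1$, and therefore $\binom{m}{i}\cdot 1_R\in\langle a^{k-i}\rangle$ and $\binom{m}{i}a^{i}=0$. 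For $i\geq k$ the term already vanishes because $a^{k}=0$. With this adjustment your proof goes through in full generality.
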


\noindent \textit{Proof of Theorem 2.1}
    The proof immediately follows by uplifting primitive idempotents of $\mathbb{F}_2G$ and using the above results.
\qed  \vspace{.25cm}\\
Note that the above set of primitive idempotents is not a complete set of primitive idempotents. However, we can proceed along similar lines to obtain the complete set of primitive idempotents of $RG.$ 
The information on primitive idempotents of $RG$ will be used in subsequent sections.
\section{\textbf{{THE NUMBER OF WORDS IN A CYCLIC CODE}}} \label{6}

In this section, we obtain the number of words in a cyclic code $\mathcal{C}$. Suppose $\mathcal{C}_1=\langle s^ke_{j_1j_2\cdots j_r}\rangle$ with $0 < k < t$. Note that $s^{t-k}\cdot s^k e_{j_1j_2...j_r}=0,$ which implies that $s^ke_{j_1j_2\cdots j_r}$ is not a free basis of $\mathcal{C}_1$ as an $RG$-module. So, instead of computing dimension, here we compute the number of words in $\mathcal{C}$. The following lemma is useful to prove the main Theorem \ref{ab}. 

\begin{lemma}\label{abcd}
    Let $n=p_1^{n_1}p_2^{n_2}...p_r^{n_r}$, where $p_i{'}s$ are distinct odd primes, $n_i{'}s $ are natural numbers, for $1 \leq i \leq r$;
    $2$ be a primitive root of ${p_i^{n_i}}$, $gcd(p_{i}-1,p_{i{'}}-1)=2$, for all $1\leq i \neq i{'}  \leq r$. Then $ord_{p_1^{n_1-j_1}p_2^{n_2-j_2}...p_r^{n_r-j_r}}(2)=\frac{\varphi(p_1^{n_1-j_1}p_2^{n_2-j_2}...p_r^{n_r-j_r})}{2^{r-1}}$, for all $1\leq j_i \leq n_i-1, 1 \leq i \leq r$. 
    \begin{proof}
         We will prove it by induction on $r$.\\
        Suppose $n=p_1^{n_1}p_2^{n_2}$. Let $ord_{p_1^{n_1-j_1}p_2^{n_2-j_2}}(2)=h$. Since $2^h\equiv 1$ $mod~{p_1^{n_1-j_1}p_2^{n_2-j_2}}$, we get $2^h \equiv 1$ $mod~p_1^{n_1-j_1}$ and $2^h \equiv 1$ $mod~p_2^{n_2-j_2}$ . As $2$ is a primitive root of  $p_1^{n_1}$, by  Lemma $1$ of \cite{GKMR}, $2$ is also primitive root of $p_1^{n_1-j_1}$ for $1\leq j_1 \leq {n_1-1}$. Therefore, $\varphi(p_1^{n_1-j_1})|h$. Similarly, $\varphi(p_2^{n_2-j_2})|h$ and hence $lcm({\phi(p_1^{n_1-j_1}),\varphi(p_2^{n_2-j_2})})|h$. Since $gcd(p_1-1,p_2-1)=2$, we have $lcm({\phi(p_1^{n_1-j_1}),\varphi(p_2^{n_2-j_2})})=\frac{{\phi(p_1^{n_1-j_1})\varphi(p_2^{n_2-j_2})}}{2}.$ 
        On the other hand, as $2^{\varphi(p_1^{n_1-j_1})} \equiv1$ $(mod~p_1^{n_1-j_1})$, this implies that  $2^{\varphi(p_1^{n_1-j_1})\varphi(p_2^{n_2-j_2})/2} \equiv1$ $(mod~p_1^{n_1-j_1})$. Similarly,\\ $2^{\varphi(p_1^{n_1-j_1})\varphi(p_2^{n_2-j_2})/2} \equiv1$ $(mod~p_2^{n_2-j_2})$. Thus $h=\frac{\varphi(p_1^{n_1-j_1})\varphi({p_2^{n_2-j_2}})}{2}$.\\
       Suppose that the result holds for $r-1$ factors. We will prove the result for $r$ factors.
        As $gcd(p_i-1,p_{i{'}}-1)=2$, for all $ 1\leq i \neq i{'}  \leq r$, as there exist atmost one $p_i$, say $p_1$, such that $p_1$ is of the form $p_1-1=2^lk_1$, where $k_1$ odd integer, $l\geq 2$ and the remaining $p_i-1$ is of the form $p_i-1=2k_i$, where $k_i$ is an odd integer. Therefore, $gcd((p_1-1)(p_2-1)\cdots (p_{r-1}-1),(p_r-1))=2$. By hypothesis, $ord_{p_1^{n_1-j_1}p_2^{n_2-j_2}...p_{r-1}^{n_{r-1}-j_{r-1}}}(2)=\frac{\varphi(p_1^{n_1-j_1}p_2^{n_2-j_2}\cdots p_{r-1}^{n_{r-1}-j_{r-1}})}{2^{r-2}}$. Since $2$ is a primitive root mod $p_r^{n_r}$, we have $2^{\varphi(p_r^{n_r})}\equiv 1 mod~p_r^{n_r}$ and $2$ is also primitive root of $p_r^{n_r-j_r}$ which implies that $2^{\varphi(p_r^{n_r-j_r})} \equiv 1~ mod~(p_r^{n_r-j_r})$ and $2^\frac{\varphi(p_1^{n_1-j_1}p_2^{n_2-j_2}\cdots p_{r-1}^{n_r-j_r})}{2^{r-2}}\equiv 1~ mod~(p_1^{n_1-j_1}p_2^{n_2-j_2}\cdots p_{r-1}^{n_r-j_r})$. In a similar way, one can prove that $ord_{p_1^{n_1-j_1}p_2^{n_2-j_2}...p_r^{n_r-j_r}}(2)=\frac{\varphi(p_1^{n_1-j_1}p_2^{n_2-j_2}...p_r^{n_r-j_r})}{2^{r-1}}$.
    \end{proof}
\end{lemma}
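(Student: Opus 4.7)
\textbf{Proof plan for Lemma \ref{abcd}.} My approach is induction on the number $r$ of prime factors, mirroring the sketch the authors begin in the excerpt, with the multiplicative order computed against the Chinese Remainder Theorem decomposition and controlled from below by least common multiples and from above by explicit exponents.

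For the base case $r=2$, set $h=\mathrm{ord}_{p_1^{n_1-j_1}p_2^{n_2-j_2}}(2)$. The congruence $2^h\equiv 1$ modulo the product forces $2^h\equiv 1$ modulo each prime power factor, and since $2$ is a primitive root of $p_i^{n_i}$ it remains a primitive root of $p_i^{n_i-j_i}$ for every $1\le j_i\le n_i-1$ (this is Lemma~1 of \cite{GKMR}). Hence $\varphi(p_1^{n_1-j_1})$ and $\varphi(p_2^{n_2-j_2})$ both divide $h$, and therefore so does their lcm. The hypothesis $\gcd(p_1-1,p_2-1)=2$, combined with the coprimality of $p_1$ and $p_2$, forces $\gcd(\varphi(p_1^{n_1-j_1}),\varphi(p_2^{n_2-j_2}))=2$, so
\[
\mathrm{lcm}\bigl(\varphi(p_1^{n_1-j_1}),\varphi(p_2^{n_2-j_2})\bigr)=\tfrac{1}{2}\varphi(p_1^{n_1-j_1})\varphi(p_2^{n_2-j_2}).
\]
For the matching upper bound, observe that $2^{\varphi(p_1^{n_1-j_1})\varphi(p_2^{n_2-j_2})/2}\equiv 1\pmod{p_i^{n_i-j_i}}$ for each $i$, since in each factor the exponent is a multiple of $\varphi(p_i^{n_i-j_i})$ (divisibility by $2$ is absorbed into whichever $\varphi$ is even, and the other factor is automatically a multiple of the relevant $\varphi$). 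Lifting via CRT, this exponent works modulo the product, so $h$ divides it. This pins down $h$ exactly.

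For the inductive step, assume the claim for $r-1$ factors. Write $N=p_1^{n_1-j_1}\cdots p_{r-1}^{n_{r-1}-j_{r-1}}$ and $Q=p_r^{n_r-j_r}$, and let $h=\mathrm{ord}_{NQ}(2)$. By the induction hypothesis, $\mathrm{ord}_N(2)=\varphi(N)/2^{r-2}$, and by the previous paragraph's argument $\mathrm{ord}_Q(2)=\varphi(Q)$, so both divide $h$. The crucial arithmetic input is now
\[
\gcd\bigl((p_1-1)(p_2-1)\cdots(p_{r-1}-1),\;p_r-1\bigr)=2,
\]
which one verifies by noting that the pairwise condition $\gcd(p_i-1,p_{i'}-1)=2$ forces at most one $p_i-1$ to be divisible by $4$; the $2$-adic valuations of the $p_i-1$ are therefore all $1$ except possibly one. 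Hence $\gcd(\varphi(N)/2^{r-2},\varphi(Q))=2$, giving $\mathrm{lcm}=\varphi(N)\varphi(Q)/2^{r-1}$, which divides $h$. The reverse divisibility is obtained exactly as in the base case: the exponent $\varphi(N)\varphi(Q)/2^{r-1}$ is a multiple of $\mathrm{ord}_N(2)$ and of $\mathrm{ord}_Q(2)$, so $2$ raised to it is $1$ modulo $N$ and modulo $Q$, hence modulo $NQ$ by CRT.

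The main obstacle, and the only place care is needed, is the $2$-adic book-keeping in $\gcd(\varphi(N),\varphi(Q))$: one must verify that the factor of $2^{r-2}$ already divided out of $\varphi(N)/2^{r-2}$ has not stripped away the shared factor of $2$ with $\varphi(Q)$. This is handled precisely by the observation that at most one prime $p_i$ satisfies $4\mid p_i-1$, so the $2$-adic valuation of $\varphi(N)$ is either $r-1$ or $r-2+v_2(p_\ast-1)$ for a unique distinguished index, and a brief case analysis confirms $\gcd(\varphi(N)/2^{r-2},\varphi(Q))=2$ in every case. Once this gcd is in hand, the computation of both $\mathrm{lcm}$ and the matching upper bound is routine.
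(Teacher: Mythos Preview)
Your proof is essentially the paper's: both proceed by induction on $r$, handle the base case via CRT and the lcm formula using $\gcd(p_1-1,p_2-1)=2$ together with the fact (Lemma~1 of \cite{GKMR}) that $2$ remains a primitive root modulo each $p_i^{n_i-j_i}$, and in the inductive step invoke the observation that at most one $p_i-1$ is divisible by $4$ to control the $2$-part of the relevant gcd. Your treatment of the inductive step is in fact more explicit than the paper's, which simply records $\gcd\bigl((p_1-1)\cdots(p_{r-1}-1),\,p_r-1\bigr)=2$ and then concludes ``in a similar way, one can prove'' the claimed order.
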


The main theorem of this section is as follows:

\begin{theorem}\label{ab}
    If $\mathcal{C}_1=\langle s^k e \rangle$, where $e$ is a primitive idempotent of $RG$, then the number of words in $\mathcal{C}_1$ is 
    \[
   |\mathcal{C}_1| =
\begin{cases}
2^{t - k}, & \text{if } e=e_{0\dots 0}, \\
2^{(t - k)(p_{i_1}^{j_{i_1}}-p_{i_1}^{j_{i_1}-1})}, & \text{if } e=e_{0\cdots j_{i_1}\cdots 0},\\
2^\frac{{(t - k)(p_{i_1}^{j_{i_1}}-p^{j_{i_1}-1}_{i_1})(p^{j_{i_2}}_{i_2}-p^{j_{i_2}-1}_{i_2})}}{2}, & \text{if } e=e_{0\cdots {j_{i_1}}\cdots {j_{i_2}}\cdots 0}^{(1)},\\
\cdots \cdots\\
2^\frac{{(t - k)(p_1^{j_1}-p_1^{j_1-1})(p_2^{j_2}-p_2^{j_2-1})\cdots (p_r^{j_r}-p_r^{j_r-1})}}{2^{r-1}}, & \text{if } e=e_{j_1j_2\cdots j_r}^{(1)},\\

\end{cases}
    \]
\end{theorem}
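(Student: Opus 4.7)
The plan is to observe that, for any primitive idempotent $e$ of $RG$, the summand $RGe$ is itself a finite chain ring whose ideal lattice has already been characterised, and then to apply Proposition 2.2(b) to $RGe$ directly. The counting question then collapses to identifying the residue field $RGe/sRGe$, which has already been done inside the proof of Theorem \ref{uvw}.

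First I would invoke the theorem of Silva stated just before the current section (``every ideal of $RGe$ is of the form $\langle s^j e\rangle$ with $0 \le j \le t$'') to deduce that $RGe$ is a finite chain ring whose maximal ideal $\langle se\rangle$ has nilpotency index exactly $t$. Applying Proposition \ref{pq} together with the lifting result (Theorem \ref{3}), the residue field is
\begin{equation*}
\frac{RGe}{sRGe} \;\cong\; \bar{R}G\bar{e},
\end{equation*}
and by the simple-component computation performed inside the proof of Theorem \ref{uvw}, this is one specific extension $\mathbb{F}_{2^m}$ of $\mathbb{F}_2$ depending on which primitive idempotent $e$ has been chosen.

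Second, I would apply Proposition 2.2(b) to the chain ring $RGe$: with residue field of size $2^m$ and nilpotency index $t$,
\begin{equation*}
|\mathcal{C}_1| \;=\; |\langle s^k e\rangle| \;=\; |\mathbb{F}_{2^m}|^{\,t-k} \;=\; 2^{m(t-k)}.
\end{equation*}
It then remains only to read off $m$ case by case from Theorem \ref{uvw}: one gets $m=1$ for $e=e_{0\cdots 0}$; $m=p_{i_1}^{j_{i_1}}-p_{i_1}^{j_{i_1}-1}$ for $e=e_{0\cdots j_{i_1}\cdots 0}$; $m=(p_{i_1}^{j_{i_1}}-p_{i_1}^{j_{i_1}-1})(p_{i_2}^{j_{i_2}}-p_{i_2}^{j_{i_2}-1})/2$ for $e=e^{(1)}_{0\cdots j_{i_1}\cdots j_{i_2}\cdots 0}$; and in general $m=(p_1^{j_1}-p_1^{j_1-1})\cdots(p_r^{j_r}-p_r^{j_r-1})/2^{r-1}$ for $e=e^{(1)}_{j_1\cdots j_r}$. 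Substituting these values into $2^{m(t-k)}$ reproduces each line of the stated formula.

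The one subtle point requiring verification is that $se$ has nilpotency index exactly $t$ in $RGe$, i.e.\ $s^{t-1}e\neq 0$; I would either quote Silva's ideal theorem, which implicitly asserts that $\langle s^{t-1}e\rangle$ is the smallest nonzero ideal of $RGe$, or argue directly that $s^{t-1}e=0$ would force the reduction $\bar e$ of $e$ to be a zero idempotent in $\bar R G$, contradicting primitivity. Lemma \ref{abcd} is not invoked directly here; it serves to justify the identification of the degrees $m$ inside Theorem \ref{uvw}, so once those degrees are granted, the present counting is a routine application of the chain-ring structure, and the main obstacle really amounts to careful bookkeeping of indices.
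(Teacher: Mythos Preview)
Your argument is correct, and your second justification that $s^{t-1}e\neq 0$ (via ``$s^{t-1}e=0$ forces all coefficients of $e$ into $\langle s\rangle$, hence $\bar e=0$'') is sound.

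The route, however, is not quite the one taken in the paper. The paper treats the cases with somewhat different hands-on techniques: for $e=e_{0\cdots j_{i_1}\cdots 0}$ it uses the direct-sum splitting
\[
RC_{p_{i_1}^{n_{i_1}}}s^k\widehat{a_{i_1}^{p_{i_1}^{j_{i_1}}}}=RC_{p_{i_1}^{n_{i_1}}}s^k\bigl(\widehat{a_{i_1}^{p_{i_1}^{j_{i_1}}}}-\widehat{a_{i_1}^{p_{i_1}^{j_{i_1}-1}}}\bigr)\oplus RC_{p_{i_1}^{n_{i_1}}}s^k\widehat{a_{i_1}^{p_{i_1}^{j_{i_1}-1}}}
\]
and counts the two outer terms via the free $R$-module structure of $R(G/H)$; for the mixed cases it passes to the polynomial model $\Gamma=R[x]/\langle f\rangle$, invokes Hensel's lemma to control $\deg f$, builds the explicit epimorphism $\eta:\Gamma\to s^k\Gamma$ with kernel $s^{t-k}\Gamma$, and then counts $\bigl|R_i[x]/\langle\bar f\rangle\bigr|=|R_i|^{\deg f}$ with $R_i=R/\langle s^{t-k}\rangle$. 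Your proposal collapses all of this into a single invocation of Proposition~2.2(b) applied not to $R$ but to the chain ring $RGe$ itself, with the residue-field sizes imported wholesale from Theorem~\ref{uvw}. What you gain is uniformity and brevity; what the paper's approach buys is that the identification of $\deg f$ with the claimed exponents is made visible through Lemma~\ref{abcd} and the cyclotomic factorisation, rather than being read off from the earlier tensor-product decomposition.
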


\begin{proof}
If $e=e_{0...0}=\widehat{G}$ then  $|\langle s^ke\rangle|=|{R}s^k|=|\overline{{R}}|^{t-k}=2^{t-k}.$\\
Now assume $e=e_{0\cdots{{j_{i_1}}}\cdots0}$. Note that for any finite subgroup $H$, $[RGs^k]\widehat{H}\cong R(\frac{G}{H})s^k$ and therefore, \[
    \langle s^ke_{0\cdots{{j_{i_1}}}\cdots0}\rangle={R}Gs^ke_{0\cdots{{j_{i_1}}}\cdots0}\cong {R}C_{p_{i_1}^{n_{i_1}}}s^k( {\widehat{a_{i_1}^{p_{i_1}^{j_{i_1}}}}}-{\widehat{a_{i_1}^{p_{i_1}^{j_{i_1}-1}}}} )
\]
Since $\langle a_{i_1}^{p_{i_1}^{j_{i_1}}}\rangle \subseteq \langle a_{i_1}^{p_{i_1}^{j_{i_1}-1}}\rangle $ and $(\widehat{a_{i_1}^{p_{i_1}^{j_{i_1}}}}-\widehat{a_{i_1}^{p_{i_1}^{j_{i_1}-1}}}) \cdot \widehat{a_{i_1}^{p_{i_1}^{j_{i_1}-1}}}=0 $, it implies that
\[
{R}C_{p_{i_1}^{n_{i_1}}}s^k{\widehat{a_{i_1}^{p_{i_1}^{j_{i_1}}}}}={R}C_{p_{i_1}^{n_{i_1}}}s^k( {\widehat{a_{i_1}^{p_{i_1}^{j_{i_1}}}}}-{\widehat{a_{i_1}^{p_{i_1}^{j_{i_1}-1}}}})\oplus {R}C_{p_{i_1}^{n_{i_1}}}s^k{\widehat{a_{i_1}^{p_{i_1}^{j_{i_1}-1}}}},
\]
and so
\[
|{R}C_{p_{i_1}^{n_{i_1}}}s^k{\widehat{a_{i_1}^{p_{i_1}^{j_{i_1}}}}}|=|{R}C_{p_{i_1}^{n_{i_1}}}s^k ( {\widehat{a_{i_1}^{p_{i_1}^{j_{i_1}}}}}-{\widehat{a_{i_1}^{p_{i_1}^{j_{i_1}-1}}}} )| |{R}C_{p_{i_1}^{n_{i_1}}}s^k{\widehat{a_{i_1}^{p_{i_1}^{j_{i_1}-1}}}}|.
\]
This implies $|{R}C_{p_{i_1}^{n_{i_1}}}s^k{\widehat{a_{i_1}^{p_{i_1}^{j_{i_1}}}}}|=|\overline{{R}}|^{(t-k)p_{i_1}^{j_{i_1}}}$ and therefore,
\[
|\langle s^ke_{0\cdots{{j_{i_1}}}\cdots0}\rangle|=|\overline{{R}}|^{(t-k)(p_{i_1}^{j_{i_1}}-p_{i_1}^{j_{i_1}-1})}=2^{(t-k)(p_{i_1}^{j_{i_1}}-p_{i_1}^{j_{i_1}-1})}.
\]
Assume $e=e_{0\cdots{{j_{i_1}}}\cdots{{j_{i_2}}}\cdots 0}$. Also, 
\[
\langle s^ke_{0\cdots{{j_{i_1}}}\cdots{{j_{i_2}}}\cdots0}\rangle={R}Gs^ke_{0\cdots{{j_{i_1}}}\cdots{{j_{i_2}}}\cdots0}\cong{R}C_{p^{n_{i_1}}_{i_1} p^{n_{i_2}}_{i_2}}s^ke_{j_{i_1}j_{i_2}},\]  where \begin{align*}
    e_{j_{i_1}j_{i_2}}=( {\widehat{a_{i_1}^{p_{i_1}^{j_{i_1}}}}}-{\widehat{a_{i_1}^{p_{i_1}^{j_{i_1}-1}}}} )( {\widehat{a_{i_2}^{p_{i_2}^{j_{i_2}}}}}-{\widehat{a_{i_2}^{p_{i_2}^{j_{i_2}-1}}}} )=e_{j_{i_1}j_{i_2}}^{(1)}+e_{j_{i_1}j_{i_2}}^{(2)},
\end{align*} 
\begin{align*}
    e_{j_{i_1}j_{i_2}}^{(1)}=(u_{i_1}u_{i_2}+u_{i_1}^2u_{i_2}^2)^{2^{t-1}}
\end{align*} and
\begin{align*}
    e_{j_{i_1}j_{i_2}}^{(2)}=(u_{i_1}^2u_{i_2}+u_{i_1}u_{i_2}^2)^{2^{t-1}}
\end{align*} where $u_{i_1}$ and $u_{i_2}$ are same as in Theorem $\ref{4}$. Now 
\[
|\langle s^k e_{j_{i_1}j_{i_2}}^{(1)}\rangle |=|{R}Gs^k e_{j_{i_1}j_{i_2}}^{(1)}|=\left |\frac{s^k{R}[x]}{\langle f \rangle}\right |,
\]
where $f$ is an irreducible polynomial of
$x^n-1=\displaystyle\prod_{d|n} \Phi_d(x)$.\\
Note that the number of irreducible factor of the cyclotomic polynomial $\Phi_{p_{i_1}^{j_{i_1}}p_{i_2}^{j_{i_2}}}(x)=\frac{\varphi({p_{i_1}^{j_{i_1}}p_{i_2}^{j_{i_2}}})}{ord_{p_{i_1}^{j_{i_1}}p_{i_{i_2}}^{j_{i_2}}}(2)}=2$ and each monic irreducible polynomial have same degree $\frac{{(p^{j_{i_1}}_{i_1}-p^{j_{i_1}-1}_{i_1}})(p^{j_{i_2}}_{i_2}-p^{j_{i_2}-1}_{i_2})}{2}$ over $\mathbb{F}_2$. Also, by  Hensel’s lemma \rm{(\cite[Lemma $2.4$]{HS})}, which guarantees that  the factorization into a product of pairwise coprime polynomials over $\overline{R}$
lift to such factorization over $R$.\\
Note that $RGe_{j_{i_1}j_{i_2}}^{(1)}{\cong} \frac{R[x]}{\langle f \rangle}$, where $f$ is a irreducible factor of $\Phi_{p_{i_1}^{j_{i_1}}p_{i_2}^{j_{i_2}}}(x)$. Denote $\Gamma=\frac{R[x]}{\langle f \rangle}$.\\ Define a map 
\begin{align*}
   \eta:&~ \Gamma \longrightarrow s^k\Gamma \\& w\longmapsto s^kw
\end{align*}
which is an epimorphism such that ker$\eta=s^{t-k}\Gamma$ and therefore, $\frac{\Gamma}{(s^{t-k})\Gamma}\cong s^k\Gamma$. Also $\frac{\Gamma}{(s^{t-k})\Gamma}\cong \frac{R_i[x]}{\langle \overline{f} \rangle}$, where ${R}_i=\frac{R}{\langle s^{t-k}\rangle}$ and $\overline{f}=f+\langle s^{t-k}\rangle$.
Therefore, 
\[
|s^{k}\Gamma|=\left|\frac{\Gamma}{(s^{t-k})\Gamma}\right|=\left|\frac{R_i[x]}{\langle \overline{f} \rangle}\right|=|{R}_i|^{degf}=\left |\frac{R}{\langle s^{t-k}\rangle}\right|^{degf}=2^{(t-k)degf},
\]where deg${f=\frac{{(p^{j_{i_1}}_{i_1}-p^{j_{i_1}-1}_{i_1}})(p^{j_{i_2}}_{i_2}-p^{j_{i_2}-1}_{i_2})}{2}}$ and hence  \[
|\langle s^k e_{j_{i_1}j_{i_2}}^{(1)}\rangle|=2^{\frac{(t-k)(p^{j_{i_1}}_{i_1}-p^{j_{i_1}-1}_{i_1})(p^{j_{i_2}}_{i_2}-p^{j_{i_2}-1}_{i_2})}{2}}.
\]
As  $RGe_{j_{i_1}j_{i_2}}^{(2)}{\cong} \frac{R[x]}{\langle f_1 \rangle}$, where $f_1$ is a irreducible factor of $\Phi_{p_{i_1}^{j_{i_1}}p_{i_2}^{j_{i_2}}}(x)$ whose degree is same as $f$.
In a similar way one can prove that $|\langle s^k e_{j_{i_1}j_{i_2}}^{(2)}\rangle|=2^{\frac{(t-k)(p^{j_{i_1}}_{i_1}-p^{j_{i_1}-1}_{i_1})(p^{j_{i_2}}_{i_2}-p^{j_{i_2}-1}_{i_2})}{2}}$ .\\
If $e=e_{j_1j_2\cdots j_r}$, then by Theorem \ref{4}, we can split $e$ into $2^{r-1}$ primitive idempotents and one of them is of the form  $e_{j_1\cdots j_r}^{(1)}=(u_1u_2\cdots u_r+u_1^2u_2^2\cdots u_r^2)^{2^{t-1}}$.\\
Here $RGe_{j_1j_2\cdots j_r}^{(1)}\cong\frac{R[x]}{\langle f_2 \rangle}$, where $f_2$ is a irreducible factor of $\Phi_{p_1^{j_1}p_2^{j_2}\cdots p_r^{j_r}}(x)$ and degree of $f_2$ is \\ ${\frac{(p_1^{j_1}-p_1^{j_1-1})(p_2^{j_2}-p_2^{j_2-1})\cdots (p_r^{j_r}-p_r^{j_r-1})}{2^{r-1}}}.$\\
Further, in a similar way, we get $|\langle s^ke_{j_1j_2\cdots j_r}^{(1)}\rangle|=|{R}Gs^ke_{j_1j_2\cdots j_r}^{(1)}|=2^{\frac{(t-k)(p_1^{j_1}-p_1^{j_1-1})(p_2^{j_2}-p_2^{j_2-1})\cdots (p_r^{j_r}-p_r^{j_r-1})}{2^{r-1}}}$.
Therefore, the number of words in $\mathcal{C}$ is  $$|\mathcal{C}|=2^{(t-k)(1+p_{i_1}^{j_{i_1}}-p_{i_1}^{j_{i_1}-1}+\frac{(p_{i_1}^{j_{i_1}}-p_{i_1}^{j_{i_1}-1})(p_{i_2}^{j_{i_2}}-p_{i_2}^{j_{i_2}-1})}{2}+\cdots +\frac{(p_1^{j_1}-p_1^{j_1-1})(p_2^{j_2}-p_2^{j_2-1})\cdots (p_r^{j_r}-p_r^{j_r-1})}{2^{r-1}}) }.$$
\end{proof}

\section {\textbf{MINIMUM WEIGHT OF A CODE}} \label{7}

In this section, we compute the minimum weight of a code $\mathcal{C}_1$. First, we provide the minimum weight for a code $\mathcal{C}_1=\langle s^ke\rangle$ where $e$ is either $e_{0\cdots 0}$ or $e_{0\cdots j_{i_1} \cdots 0}$ and then we give a bound on the minimum weight of a code $\mathcal{C}_1$ for rest of the primitive idempotents.

 \begin{theorem}\label{wt1}
     If $\mathcal{C}_1=\langle s^k e \rangle $, where $e$ is a primitive idempotent of $RG$, then 
     \[ 
     w{(\mathcal{C}_1)}=
     \begin{cases}
   
        |G|, & \text{if } e=e_{0\cdots 0}, \\
         2p^{n_1}\cdots p_{i_1}^{n_{i_1}-j_{i_1}}\cdots p_r^{n_r} & \text{if } e=e_{0\cdots j_{i_1} \cdots 0}.\\
         
     \end{cases}
     \]
 \end{theorem}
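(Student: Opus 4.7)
The plan is to handle the two cases separately, each reducing to a direct weight computation. For $e = e_{0\cdots 0} = \widehat{G}$, since $g\widehat{G} = \widehat{G}$ for every $g \in G$, the code $\mathcal{C}_1 = RGs^k\widehat{G}$ collapses to $\{rs^k\widehat{G} : r \in R\}$. Because $|G|$ is odd, its image in $R/M \cong \mathbb{F}_2$ is $1$, so $|G|$ is a unit in $R$; hence every nonzero element $rs^k\widehat{G} = \frac{rs^k}{|G|}\sum_{g \in G}g$ has constant nonzero coefficient, full support $G$, and weight exactly $|G|$.

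For $e = e_{0\cdots j_{i_1}\cdots 0}$, write $p := p_{i_1}$, $j := j_{i_1}$, $n := n_{i_1}$, $H := \prod_{i\neq i_1}\langle a_i\rangle$, $H_{i_1} := \langle a_{i_1}\rangle$, $A := \langle a_{i_1}^{p^j}\rangle$ and $B := \langle a_{i_1}^{p^{j-1}}\rangle$; then $A \subsetneq B$, $|A| = p^{n-j}$, $|B| = p^{n-j+1}$, and $e = \widehat{H}\epsilon$ with $\epsilon := \widehat{A} - \widehat{B}$. Since $\widehat{H}$ is central and $h\widehat{H} = \widehat{H}$ for each $h \in H$, one checks directly that $\mathcal{C}_1 = \widehat{H}\cdot\langle s^k\epsilon\rangle_{RH_{i_1}}$, and for any $\xi = \sum_{a}\xi_a a \in RH_{i_1}$ the coefficient of $ha \in G$ in $\widehat{H}\xi$ equals $\xi_a/|H|$, nonzero exactly when $\xi_a \neq 0$ (as $|H|$ is odd and thus a unit). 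Hence $wt_{RG}(\widehat{H}\xi) = |H|\cdot wt_{RH_{i_1}}(\xi)$, and it suffices to prove $wt(\langle s^k\epsilon\rangle_{RH_{i_1}}) = 2p^{n-j}$; multiplying by $|H| = \prod_{i\neq i_1}p_i^{n_i}$ then gives the claimed value.

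From $A \subsetneq B$ one gets $\widehat{A}\widehat{B} = \widehat{B}$, so $\epsilon = \widehat{A}(1 - \widehat{B})$ and the ideal sits inside $RH_{i_1}\widehat{A}$. The natural map $\phi: RC_{p^j} \to RH_{i_1}\widehat{A}$, $\overline{a}\mapsto a\widehat{A}$, is a ring isomorphism sending $\widehat{B}$ to $\widehat{\overline{B}}$, where $\overline{B} := B/A$ is the unique order-$p$ subgroup of $H_{i_1}/A \cong C_{p^j}$, and the same weight argument (with $\widehat{A}$ in place of $\widehat{H}$) gives $wt_{RH_{i_1}}(\phi(\xi')) = |A|\cdot wt_{RC_{p^j}}(\xi')$. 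The question thus reduces to showing that $\langle s^k(1 - \widehat{\overline{B}})\rangle_{RC_{p^j}}$ has minimum weight $2$. For the upper bound, take $\overline{b}$ generating $\overline{B}$: since $\overline{b}\widehat{\overline{B}} = \widehat{\overline{B}}$, we have $(1-\overline{b})(1-\widehat{\overline{B}}) = 1-\overline{b}$, producing the weight-$2$ codeword $s^k(1-\overline{b})$. For the lower bound, if some $\beta = s^k\delta(1-\widehat{\overline{B}})$ had weight $1$, say $\beta = rg$ with $r \neq 0$, then $\beta\widehat{\overline{B}} = 0$ (since $(1-\widehat{\overline{B}})\widehat{\overline{B}} = 0$), yet $rg\widehat{\overline{B}}$ carries coefficient $r/p \neq 0$ on each element of $g\overline{B}$ --- a contradiction.

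Chaining the two reductions yields $wt(\mathcal{C}_1) = |H|\cdot|A|\cdot 2 = 2\prod_{i\neq i_1}p_i^{n_i}\cdot p_{i_1}^{n_{i_1}-j_{i_1}}$, as claimed. The only delicate points are the two weight-multiplication identities, both of which rest on the single observation that every subgroup order appearing is odd and hence a unit in $R$; this also guarantees that no accidental cancellation occurs in the support beyond the explicit coset structure.
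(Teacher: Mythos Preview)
Your proof is correct and follows essentially the same approach as the paper. The only cosmetic difference is that the paper performs the reduction in a single step using the subgroup $H = \langle a_1\rangle \times \cdots \times \langle a_{i_1}^{p_{i_1}^{j_{i_1}}}\rangle \times \cdots \times \langle a_r\rangle$ (your $H \times A$), whereas you factor the reduction into two stages (first strip off the complementary cyclic factors via $\widehat{H}$, then pass to $C_{p^j}$ via $\widehat{A}$); the key step --- ruling out weight~$1$ by multiplying with $\widehat{B}$ (the paper's $\widehat{a_{i_1}^{p_{i_1}^{j_{i_1}-1}}}$) and observing the resulting contradiction --- is identical in both arguments.
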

 \begin{proof}
 \justifying
    Assume $e=e_{0\cdots 0}$. Let $0 \neq \beta \in \mathcal{C}$ is of the form $  \beta= \mathop{\sum}\limits_{g\in G} \alpha_ggs^k \widehat{G} , \alpha_g \in {R}$, which can be written as $\beta=r\widehat{G}, r\in R$. Therefore, $w(\mathcal{C}_1)=|G|$.\\
    Now, $e=e_{0\cdots j_{i_1} \cdots 0}$. As
    \begin{align*}
        ({a_{i_1}^{p_{i_1}^{j_{i_1}}}}-a_{i_1}^{p_{i_1}^{j_{i_1}-1}})s^ke_{0\cdots{{j_{i_1}}}\cdots0}&=({a_{i_1}^{p_{i_1}^{j_{i_1}}}}-{a_{i_1}^{p_{i_1}^{j_{i_1}-1}}})s^k {\widehat{a_1}}\cdots ( {\widehat{a_{i_1}^{p_{i_1}^{j_{i_1}}}}}-{\widehat{a_{i_1}^{p_{i_1}^{j_{i_1}-1}}}})\cdots {\widehat{a_r}}\\&=(1-a_{i_1}^{p_{i_1}^{j_{i_1}-1}})s^k{\widehat{a_1}}\cdots {\widehat{a_{i_1}^{p_{i_1}^{j_{i_1}}}}}\cdots{\widehat{a_r}}\in RGs^ke_{0\cdots{{j_{i_1}}}\cdots0}.
    \end{align*}
    Then $w(({a_{i_1}^{p_{i_1}^{j_1}}}-a_{i_1}^{p_{i_1}^{j_{i_1}-1}})s^ke_{0\cdots{{j_{i_1}}}\cdots0})=2 p_1^{n_1} \cdots p_{i_1}^{n_{i_1}-j_{i_1}}\cdots p_r^{n_r}$.\\
    Define $H={\langle a_1\rangle}\times{\langle a_2\rangle}\times \cdots \times {\langle a_{i_1}^{p_{i_1}^{j_{i_1}}}\rangle}\times \cdots \times {\langle  a_r\rangle}$. As ${\widehat{a_{i_1}^{p_{i_1}^{j_{i_1}}}}}={\widehat{a_{i_1}^{p_{i_1}^{j_{i_1}-1}}}}+( {\widehat{a_{i_1}^{p_{i_1}^{j_{i_1}}}}}-{\widehat{a_{i_1}^{p_{i_1}^{j_{i_1}-1}}}})$, we get
     \begin{align*}
         (RG)(s^k\widehat{H})=({R}G)s^k{\widehat{a_1}}\cdots{\widehat{a_{i_1}^{p_{i_1}^{j_{i_1}-1}}}}\cdots {\widehat{a_r}} \oplus ({R}G)s^ke_{0\cdots{{j_{i_1}}}\cdots0},
     \end{align*} hence $({R}G)s^ke_{0\cdots{{j_{i_1}}}\cdots0} \subset ({R}Gs^k\widehat{H})$ and $wt({R}Gs^k\widehat{H}) \leq wt({R}Gs^ke_{0\cdots{{j_{i_1}}}\cdots0}) \leq 2 p_1^{n_1} \cdots p_{i_1}^{n_{i_1}-j_{i_1}}\cdots p_r^{n_r} $.\\
      The transversal of $H$ in $G$ is $\mathcal{T}=\{1, a_{i_1},a_{i_1}^2\cdots a_{i_1}^{p_{i_1}^{j_{i_1}-1}}\}.$ Since $RG$ is a free $RH$ module with $\mathcal{T}$ is a free basis and $h\cdot \widehat{H}=\widehat{H}$, this implies that any element of $\beta\in \mathcal{C}$ can be written as $\beta =\beta_0s^k\widehat{H}+ \beta_1s^ka_{i_1}\widehat{H}+\beta_2s^ka_{i_1}^2\widehat{H}\cdots \beta_pa_{i_1}^p\widehat{H}+\cdots \beta_{p_{i_1}^{j_{i_1}-1}} s^ka_{p_{i_1}^{j_{i_1}-1}} \widehat{H}$ and hence $wt(\beta)=r|{H}|,$ with $r\geq 1.$\\ 
 Suppose only one coefficient is non-zero say $\beta_j$, then $\beta=\beta_js^ka_{i_1}^j\widehat{H}$ and there exist $\gamma \in RG$ such that  $\beta=\beta_js^ka_{i_1}^j\widehat{H}=\gamma \cdot(s^ke_{0\cdots {{j_{i_1}}}\cdots 0})$. As $\widehat{H}\cdot \widehat{a_{i_1}^{p_{i_1}^{j_{i_1}-1}}}={\widehat{a_1}}\cdots {\widehat{a_{i_1}^{p_{i_1}^{j_{i_1}-1}}}}\cdots {\widehat{a_r}}$ and $e_{0\cdots {{j_{i_1}}} \cdots 0}\cdot (\widehat{a_{i_1}^{p_{i_1}^{j_{i_1}-1}}})=0$, multiplying $\beta$ by $\widehat{a_{i_1}^{p_{i_1}^{j_{i_1}-1}}}$, we get $\beta_js^ka_{i_1}^j\widehat{a_1}\cdots \widehat{a_{i_1}^{p_{i_1}^{j_{i_1}-1}}}\cdots \widehat{a_r}=0$ which implies that $\beta_js^k=0$ and hence $\beta=0$, that leads  to a contradiction. Thus, $wt(\langle s^ke_{0\cdots j_{i_1}\cdots 0}\rangle)=2 p_1^{n_1} \cdots p_{i_1}^{n_{i_1}-j_{i_1}}\cdots p_r^{n_r}.$

 \end{proof}
 
Note that if $e=e_{0\cdots{{j_{i_1}}}\cdots{{j_{i_2}}}\cdots 0}$ then $e$ can be written as sum of two primitive idempotents $e_{0 \cdots{{j_{i_1}}}\cdots{{j_{i_2}}}\cdots 0}^{(1)}$, $e_{0\cdots{{j_{i_1}}}\cdots{{j_{i_2}}}\cdots 0}^{(2)}$ which are defined in Theorem \ref{4}. The following theorem provide a bounds of a code $\mathcal{C}_1=\langle s^ke_{0\cdots{{j_{i_1}}}\cdots{{j_{i_2}}}\cdots 0}^{(1)}\rangle$. Similarly, one can compute a bounds for the $\mathcal{C}_1=\langle s^ke_{0\cdots{{j_{i_1}}}\cdots{{j_{i_2}}}\cdots 0}^{(2)}\rangle$.
 \begin{theorem} \label{wt2}  If $\mathcal{C}_1=\langle s^k e_{0\cdots{{j_{i_1}}}\cdots{{j_{i_2}}}\cdots 0}^{(1)} \rangle $, then $wt(\mathcal{C}_1) \geq 4p_1^{n_1}\cdots p_{i_1}^{n_{i_1}-j_{i_1}}\cdots p_{i_2}^{n_{i_2}-j_{i_2}}\cdots p_r^{n_r}$.
     
 \end{theorem}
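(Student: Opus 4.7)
The plan is to adapt the two-step strategy of Theorem \ref{wt1}—first confining $\mathcal{C}_1$ inside $RGs^k\widehat{H}$ for a suitable subgroup $H$, then using annihilation identities to force additional support structure—while exploiting that the primitive idempotent $e^{(1)}:=e_{0\cdots{{j_{i_1}}}\cdots{{j_{i_2}}}\cdots 0}^{(1)}$ is annihilated by \emph{two} averaging idempotents instead of one. Define
\[
H=\langle a_{i_1}^{p_{i_1}^{j_{i_1}}}\rangle\,\langle a_{i_2}^{p_{i_2}^{j_{i_2}}}\rangle\prod_{i\neq i_1,i_2}\langle a_i\rangle,
\]
so that $|H|=p_1^{n_1}\cdots p_{i_1}^{n_{i_1}-j_{i_1}}\cdots p_{i_2}^{n_{i_2}-j_{i_2}}\cdots p_r^{n_r}$.

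First I would verify $e^{(1)}\widehat{H}=e^{(1)}$: the factors $\widehat{a_i}$ (for $i\neq i_1,i_2$) of $e^{(1)}$ are idempotent, while each $u_{i_\ell}$ and $u_{i_\ell}^2$ is a left multiple of $\widehat{a_{i_\ell}^{p_{i_\ell}^{j_{i_\ell}}}}$ and hence absorbs this factor of $\widehat{H}$. Therefore $\mathcal{C}_1\subseteq RGs^k\widehat{H}$; choosing the transversal $T=\{a_{i_1}^{k_1}a_{i_2}^{k_2}:0\leq k_\ell<p_{i_\ell}^{j_{i_\ell}}\}$ of $H$ in $G$, every $\beta\in\mathcal{C}_1$ takes the form $\beta=\sum_{(k_1,k_2)\in T}r_{k_1,k_2}\,a_{i_1}^{k_1}a_{i_2}^{k_2}\,s^k\widehat{H}$ with $wt(\beta)=|H|\cdot|S|$, where $S=\{(k_1,k_2):r_{k_1,k_2}s^k\neq 0\}$.

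Next I would establish $e^{(1)}\widehat{a_{i_\ell}^{p_{i_\ell}^{j_{i_\ell}-1}}}=0$ for $\ell=1,2$. The identity $(\widehat{a_{i_\ell}^{p_{i_\ell}^{j_{i_\ell}}}}-\widehat{a_{i_\ell}^{p_{i_\ell}^{j_{i_\ell}-1}}})\widehat{a_{i_\ell}^{p_{i_\ell}^{j_{i_\ell}-1}}}=0$ annihilates $e_{0\cdots j_{i_1}\cdots j_{i_2}\cdots 0}=e^{(1)}+e^{(2)}$, and the orthogonality $e^{(1)}e^{(2)}=0$ promotes this to annihilation of $e^{(1)}$ alone. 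Using $\widehat{a_{i_1}^{p_{i_1}^{j_{i_1}-1}}}\widehat{H}=\widehat{H'}$ for the enlarged subgroup $H'=\langle a_{i_1}^{p_{i_1}^{j_{i_1}-1}}\rangle\cdot H\supsetneq H$, together with the $R$-freeness of $\{t'\widehat{H'}\}$ in $RG\widehat{H'}\cong R(G/H')$ and the identity $\mathrm{Ann}_R(s^k)=s^{t-k}R$, the equation $\widehat{a_{i_1}^{p_{i_1}^{j_{i_1}-1}}}\beta=0$ becomes the fiber-sum relations
\[
\sum_{m=0}^{p_{i_1}-1}r_{k_1^0+mp_{i_1}^{j_{i_1}-1},\,k_2^0}\,s^k=0
\]
for every $(k_1^0\bmod p_{i_1}^{j_{i_1}-1},\,k_2^0)$, together with the symmetric column-fiber relations in the $i_2$-direction arising from $\widehat{a_{i_2}^{p_{i_2}^{j_{i_2}-1}}}\beta=0$.

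The proof concludes with a short combinatorial argument on $S$. Each row-fiber or column-fiber that meets $S$ must do so in at least two positions, since a single nonvanishing $r_ts^k$ cannot satisfy its fiber sum. Picking $(k_1^0,k_2^0)\in S$, the row fiber produces $(k_1^1,k_2^0)\in S$ with $k_1^1\neq k_1^0$, the column fiber produces $(k_1^0,k_2^1)\in S$ with $k_2^1\neq k_2^0$, and the row fiber through $(k_1^0,k_2^1)$ then produces $(k_1^2,k_2^1)\in S$ with $k_1^2\neq k_1^0$; these four points are pairwise distinct, so $|S|\geq 4$ and hence $wt(\beta)\geq 4|H|$. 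The main obstacle is the bookkeeping modulo $s^{t-k}R$: because $R$ is a chain ring rather than a field, the fiber-sum relations are relations on $r_ts^k$ rather than on $r_t$, so the combinatorial count must be applied to the support $S$ as defined via $r_ts^k\neq 0$; once this is set up correctly, the argument of Theorem \ref{wt1} runs with four cosets in place of two.
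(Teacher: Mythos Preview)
Your proof is correct and follows essentially the same strategy as the paper: confine $\mathcal{C}_1$ inside $RGs^k\widehat{H}$ for the subgroup $H$ you wrote down, then exploit annihilation by the two averaging idempotents $\widehat{a_{i_\ell}^{p_{i_\ell}^{j_{i_\ell}-1}}}$ to force at least four nonzero coset coefficients.

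There are two organizational differences worth noting. First, the paper does not work directly with $e^{(1)}$; it passes to the non-primitive idempotent $e=e^{(1)}+e^{(2)}$, proves the bound $wt(RGs^ke)\geq 4|H|$, and then invokes the containment $RGs^ke^{(1)}\subset RGs^ke$. Your route---showing $e^{(1)}\widehat{a_{i_\ell}^{p_{i_\ell}^{j_{i_\ell}-1}}}=0$ directly by multiplying the annihilation of $e$ by the idempotent $e^{(1)}$---is equally valid and arguably more transparent. Second, where the paper argues case by case (one, two, then three nonzero coefficients, each time deriving a coset-collision contradiction via the decomposition $H=H_{p_1}\times H_{p_2}$ and the enlarged subgroups $H_{p_1}^*,H_{p_2}^*$), you package the same information as fiber-sum constraints and run a single combinatorial argument: every row-fiber or column-fiber meeting $S$ meets it in at least two points, so four distinct points can be exhibited. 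This is cleaner and scales better, but it is the same content; your fibers are exactly the cosets of $H$ inside $H_{p_1}^*H_{p_2}$ (respectively $H_{p_1}H_{p_2}^*$), and your ``fiber meets $S$ in $\geq 2$ points'' is the contrapositive of the paper's ``if supports are disjoint then both coefficients vanish''. Your care in tracking $S$ via $r_ts^k\neq 0$ rather than $r_t\neq 0$ is precisely what the chain-ring setting requires and matches the paper's handling.
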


 \begin{proof}
 Since $e=e_{0\cdots{{j_{i_1}}}\cdots{{j_{i_2}}}\cdots 0}^{(1)}+e_{0\cdots{{j_{i_1}}}\cdots{{j_{i_2}}}\cdots 0}^{(2)}$ is a sum of primitive  orthogonal idempotents, this yields that $e_{0\cdots{{j_{i_1}}}\cdots{{j_{i_2}}}\cdots 0}^{(1)},e_{0\cdots{{j_{i_1}}}\cdots{{j_{i_2}}}\cdots 0}^{(2)}\in RGe$. Therefore, $wt(RGs^ke)\leq wt(RGs^ke_{0\cdots{{j_{i_1}}}\cdots{{j_{i_2}}}\cdots 0}^{(1)})$.\\  
     Define $H=\langle a_1\rangle \times \cdots \times \langle a_{i_1}^{p_{i_1}^{j_{i_1}}}\rangle\times \cdots  \times \langle a_{i_2}^{p_{i_2}^{j_{i_2}}}\rangle\times \cdots \times \langle a_r \rangle$. Since $s^ke=(s^ke)\cdot \widehat{H}$, we have $RGs^ke \subset RG(s^k\widehat{H})$.\\  
     Suppose $0\neq \alpha\in RGs^ke$ can be written as $\alpha=\mathop{\sum}\limits_{t\in \mathcal{T}} \alpha_t t s^k\widehat{H}$ with $\alpha_t \in R$ and $\mathcal{T}$ is a transversal of $H$ in $G$.
     If only one of coefficient is non-zero, say, $\alpha_j$, and others are zero then $\alpha=\alpha_jt_js^k\widehat{H}$ and since it is element of $RGs^ke$ then there exist $\gamma \in RG$ such that  $\alpha=\alpha_jt_js^k \widehat{H}=\gamma \cdot (s^ke)$. As $\widehat{H}\cdot \widehat{a_{i_1}^{p_{i_1}^{j_{i_1}-1}}}=\widehat{a_1}\cdots \widehat{a_{i_1}^{p_{i_1}^{j_{i_1}-1}}}\widehat{a_{i_2}^{p_{i_2}^{j_{i_2}}}}\cdots \widehat{a_r}$ and $e\cdot \widehat{a_{i_1}^{p_{i_1}^{j_{i_1}-1}}}=0$, multiplying $\alpha$ by $\widehat{a_{i_1}^{p_{i_1}^{j_{i_1}-1}}}$, we get $(\alpha_jt_j)s^k\widehat{a_1}\cdots \widehat{a_{i_1}^{p_{i_1}^{j_{i_1}-1}}}\widehat{a_{i_2}^{p_{i_2}^{j_{i_2}}}}\cdots \widehat{a_r}=0$ implies $\alpha_js^k=0$, hence $\alpha=0$ which leads to a contradiction. \\
    Now, if only two coefficients are non-zero and others are zero then $\alpha=(\alpha_{j}t_j+\alpha_{j{'}}t_{j'})s^k\widehat{H}$, with $t_j \neq t_j{'}$. \\
    Now, write $H=H_{p_1}\times H_{p_2}$ where $H_{p_1}=\langle a_1\rangle \times \cdots \times \langle a_{i_1}^{p_{i_1}^{j_{i_1}}}\rangle\times \langle a_{i_2-1}\rangle \times \langle a_{i_2+1}\rangle\times   \cdots \times\langle a_r\rangle$, $H_{p_2}=\langle {a_{i_2}^{p_{i_2}^{j_{i_2}}}}\rangle$. Define $H_{p_1}^*=\langle a_1\rangle \times \cdots \times \langle a_{i_1}^{p_{i_1}^{j_{i_1}-1}}\rangle \times \langle a_{i_2-1}\rangle \times \langle a_{i_2+1}\rangle\times  \cdots \times \langle a_r\rangle$,  and $H_{p_2}^*=\langle {a_{i_2}^{p_{i_2}^{j_{i_2}-1}}}\rangle$. Since $\alpha \in RGs^ke$ it implies that $\alpha=\alpha(\widehat{H_{p_1}}-\widehat{H_{p_1}^*})$ and hence $(\alpha_jt_j+\alpha_{j{'}}t_{j{'}})s^k\widehat{H_{p_1}^*}\widehat{H_{p_2}}=0$.\\
  We claim that $supp~(t_js^k\widehat{H_{p_1}^*}\widehat{H_{p_2}}) \cap supp~(t_{j{'}}s^k\widehat{H_{p_1}^*}\widehat{H_{p_2}})=\emptyset$. If $x\in supp~(t_j\widehat{H_{p_1}^*}\widehat{H_{p_2})} \cap supp~(t_{j{'}}\widehat{H_{p_1}^*}\widehat{H_{p_2}}) $ then $t_{j{'}}^{-1}t_j\in H_{p_1}^*H_{p_2}$, so there exists an elements $h_{p_1}{'}\in H_{p_1}^*, h_{p_2}\in H_{p_2}$ such that $t_{j{'}}^{-1}t_j=h_{p_1}{'}h_{p_2}$. Similarly, $(\alpha_{j}t_j+\alpha_{j{'}}t_{j{'}})H_{p_1}H_{p_2}^*=0$ so there exist elements $h_{p_1}\in H_{p_1} $ and $h_{p_2}{'} \in H_{p_2}^*$ such that $t_{j{'}}^{-1}t_j=h_{p_1}h_{p_2}{'}$, we get $h_{p_1}^{-1}h_{p_1}{'}=h_{p_2}{'}h_{p_2}^{-1}\in H_{p_1}^* \cap H_{p_2}^*=\{1\}$, Therefore, $t_j H_{p_1} H_{p_2} =t_{j{'}}H_{p_1} H_{p_2}$, that leads to a contradiction.\\
     Assume $\alpha=(\alpha_{j}t_j+\alpha_{j{'}}t_{j{'}}+\alpha_{j^"}t_{j^"})s^k\widehat{a_1}\cdots \widehat{a_{i_1}^{p_{i_1}^{j_{i_1}}}}\widehat{a_{i_2}^{p_{i_2}^{j_{i_2}}}}\cdots \widehat{a_r}$, multiplying by $(\widehat{a_{i_1}^{p_{i_1}^{j_{i_1}}}}-\widehat{a_{i_1}^{p_{i_1}^{j_{i_1}-1}}})$ and $(\widehat{a_{i_2}^{p_{i_2}^{j_{i_2}}}}-\widehat{a_{i_2}^{p_{i_2}^{j_{i_2}-1}}})$, we get $(\alpha_{j}t_j+\alpha_{j{'}}t_{j{'}}+\alpha_{j^"}t_{j^"})s^k\widehat{a_{i_1}^{p_{i_1}^{j_{i_1}-1}}}\widehat{a_{i_2}^{p_{i_2}^{j_{i_2}}}}=0$ and $(\alpha_{j}t_j+\alpha_{j{'}}t_{j{'}}+\alpha_{j^"}t_{j^"}))s^k\widehat{a_{i_1}^{p_{i_1}^{j_{i_1}}}}\widehat{a_{i_2}^{p_{i_2}^{j_{i_2}-1}}}=0$. Also we have $(\alpha_{j}t_j+\alpha_{j{'}}t_{j{'}}+\alpha_{j^"}t_{j^"})s^kH_{p_1}^*H_{p_2}=0$. Similarly, using the above process, we get that  $t_j H_{p_1} H_{p_2} =t_{j{'}}H_{p_1} H_{p_2}=t_{j^"}H_{p_1} H_{p_2}$, that leads to a contradiction.\\
     We claim that $s^k\alpha=s^k(1-\gamma_1)(1-\gamma_2)\widehat{H}\in RGs^ke$, where $\gamma_1\in \langle a_{i_1}^{p_{i_1}^{j_{i_1}-1}}\rangle \backslash \langle a_{i_1}^{p_{i_1}^{j_{i_1}}}\rangle $ and $\gamma_2\in \langle a_{i_2}^{p_{i_2}^{j_{i_2}-1}}\rangle \backslash \langle a_{i_2}^{p_{i_2}^{j_{i_2}}}\rangle$. Note that $s^k\alpha e=s^k(1-\gamma_1)(1-\gamma_2)\widehat{H}e=s^k\alpha $ and hence $s^k\alpha=s^k\alpha e$ and thus $wt(s^k\alpha)=4|H|=4p_1^{n_1}\cdots p_{i_1}^{n_{i_1}-j_{i_1}}\cdots p_{i_2}^{n_{i_2}-j_{i_2}}\cdots p_r^{n_r}$ hence $ wt(\langle s^ke_{0\cdots{{j_{i_1}}}\cdots{{j_{i_2}}}\cdots 0}^{(1)}\rangle)\geq 4 p_1^{n_1}\cdots p_{i_1}^{n_{i_1}-j_{i_1}}\cdots p_{i_2}^{n_{i_2}-j_{i_2}}\cdots p_r^{n_r}$.
      \end{proof}
      Here we are providing a technique to calculate a bound on the minimum weight of  code $\mathcal{C}=\langle s^ke_{0\cdots j_{i_1}j_{i_2}\cdots j_{i_l}\cdots 0}^{(1)}\rangle $ where $2 \leq l \leq r$.\\
       For $l=2$, take the primitive idempotent of the form $e_{0\cdots j_{i_1}\cdots j_{i_2}\cdots 0}^{(1)}$. Let $K_1=\langle a_{i_1}^{p_{i_1}^{j_{i_1}-1}}\rangle \times \langle a_{i_2}^{p_{i_2}^{j_{i_2}-1}}\rangle$ and $K_2=\langle a_{i_1}^{p_{i_1}^{j_{i_1}}}\rangle \times \langle a_{i_2}^{p_{i_2}^{j_{i_2}}}\rangle$, so $K_2\leq K_1\leq G$ and $\frac{K_1}{K_2} \cong C_{p_{i_1}} \times C_{p_{i_2}}=\langle a_{i_1}^{p_{i_1}^{j_{i_1}-1}}K_2\rangle \times \langle a_{i_2}^{p_{i_2}^{j_{i_2}-1}}K_2\rangle $. Define the map\[
     \sigma : (RK_1)s^k\widehat{K_2} \longrightarrow Rs^k \left (\frac{K_1}{K_2}\right)
     \]
     \[
      a_{i_1}^{p_{i_1}^{j_{i_1}-1}}s^k\widehat{K_2} \longmapsto  a_{i_1}^{p_{i_1}^{j_{i_1}-1}}s^kK_2 \]and \[a_{i_2}^{p_{i_2}^{j_{i_2}-1}}s^k\widehat{K_2} \longmapsto a_{i_2}^{p_{i_2}^{j_{i_2}-1}}s^k K_2\] which is an isomorphism. An arbitrary element $\gamma \in (RK_1)s^k\widehat{K_2}$ is of the form \\$$\gamma=\sum_{l=0}^{p_{i_1}-1} \sum_{z=0}^{p_{i_2}-1} \gamma_{lz}s^ka_{i_1}^{lp_{i_1}^{j_{i_1}-1}}a_{i_2}^{zp_{i_2}^{j_{i_2}-1}}\widehat{K_2}.$$ As $supp~(a_{i_1}^{l_1p_{i_1}^{j_{i_1}-1}}a_{i_2}^{z_1p_{i_2}^{j_{i_2}-1}}s^k\widehat{K_2}) \cap supp~(a_{i_1}^{l_2p_{i_1}^{j_{i_1}-1}}a_{i_2}^{z_2p_{i_2}^{j_{i_2}-1}}s^k\widehat{K_2})=\emptyset $, for $1 \leq l_1,l_2 \leq p_{i_1}-1$, $1 \leq z_1,z_2 \leq p_{i_2}-1$ and $(l_1,z_1) \neq (l_2,z_2)$, we have $l_1p_{i_1}^{j_{i_1}-1}\not \equiv l_2p_{i_2}^{j_{i_2}-1} mod ~p_{i_1}^{j_{i_1}}$ or $z_1p_{i_2}^{j_{i_2}-1}\not \equiv z_2p_{i_2}^{j_{i_2}-1} mod ~p_{i_2}^{j_{i_2}}$. Therefore, \[
     w(\gamma)=w(\sum_{l=0}^{p_{i_1}-1} \sum_{z=0}^{p_{i_2}-1}  \gamma_{lz}s^ka_{i_1}^{lp_{i_1}^{j_{i_1}-1}}a_{i_2}^{zp_{i_2}^{j_{i_2}-1}}\widehat{K_2})
     \]
     \[
     =\sum_{l=0}^{p_{i_1}-1} \sum_{z=0}^{p_{i_2}-1} \gamma_{lz}w(s^ka_{i_1}^{lp_{i_1}^{j_{i_1}-1}}a_{i_2}^{zp_{i_2}^{j_{i_2}-1}}\widehat{K_2})
     \]
     \[
     =\sum_{l=0}^{p_{i_1}-1} \sum_{z=0}^{p_{i_2}-1} \gamma_{lz}w(s^k\widehat{K_2}).
     \] 
     Suppose $\alpha \in  (RG)s^ke_{0\cdots j_{i_1}\cdots j_{i_2}\cdots 0}^{(1)}$. Then $\alpha$ can be written as $$\alpha=\sum_{{x}=0}^{p_{i_1}^{j_{i_1}-1}-1} \sum_{{y}=0}^{p_{i_2}^{j_{i_2}-1}-1}\lambda_{{x}{y}}s^ka_{i_1}^{x}a_{i_2}^{y},$$ where $\lambda_{xy} \in (RK_1)e_{0\cdots j_{i_1}\cdots j_{i_2}\cdots 0}^{(1)}$ and hence $w(\alpha)=\mathop{\sum}\limits_{{x}=0}^{p_{i_1}^{j_{i_1}}-1} \mathop{\sum}\limits_{{y}=0}^{p_{i_2}^{j_{i_2}}-1} w(\lambda_{{x}{y}}s^k)$. Therefore, $w(\alpha) \geq w(\lambda_{xy})$. Thus $w(RGs^ke_{0\cdots j_{i_1}\cdots j_{i_2}\cdots 0}^{(1)})=w((RK_1s^ke_{0\cdots j_{i_1}\cdots j_{i_2}\cdots 0}^{(1)})$.

Similarly, for $l=r$, the idempotent $e_{j_1 j_{2}\cdots j_{r}}$ can be written as a sum of $2^{r-1}$ primitive orthogonal idempotents discussed in Theorem \ref{4}. If $e_{j_1 j_{2}\cdots j_{r}}^{(1)}$ is a primitive idempotent corresponding to  $e_{j_1 j_{2}\cdots j_{r}}$ then $e_{j_1 j_{2}\cdots j_{r}}^{(1)} \in RGs^ke_{j_1 j_{2}\cdots j_{r}}$ which implies $wt(RGs^ke_{j_1 j_{2}\cdots j_{r}})\leq wt(RGs^ke_{j_1 j_{2}\cdots j_{r}}^{(1)})$. Take $H=\langle a_1^{p_1^{j_1}}\rangle \times \cdots \times \langle a_{r-1}^{p_{r-1}^{j_{r-1}}}\rangle \times \langle a_r^{p_r^{j_r}}\rangle $, now write $H=H_{p_r-1}\times H_{p_r}$ where $H_{p_r-1}=\langle a_1^{p_1^{j_1}}\rangle \times \cdots \times \langle a_{r-1}^{p_{r-1}^{j_{r-1}}}\rangle$ and $H_{p_r}=\langle a_r^{p_r^{j_r}}\rangle $. Define $H_{p_{r-1}}^*=\langle a_1^{p_1^{j_1-1}}\rangle \times \cdots \times \langle a_{r-1}^{p_{r-1}^{j_{r-1}-1}}\rangle$, $H_{p_r}^*=\langle {a_r^{p_{r}^{j_r-1}}}\rangle $. In a similar way, we get $wt(\langle s^k e_{j_1\cdots j_r}\rangle)=2^{r-1}p_1^{n_1-j_1}\cdots p_r^{n_r-j_r}$ which implies $wt(\langle s^k e_{j_1\cdots j_r}^{(1)}\rangle) \geq 2^{r-1}p_1^{n_1-j_1}\cdots p_r^{n_r-j_r}.$

To calculate upper bound define $H_1=\langle a_1^{p_1^{j_1-1}}\rangle  \times ...\times \langle a_r^{p_r^{j_r-1}}\rangle $ and $H_2=\langle a_1^{p_1^{j_1}}\rangle  \times...\times \langle a_r^{p_r^{j_r}}\rangle $, so $H_2 \leq H_1 \leq G $ and $\frac{H_1}{H_2}\cong C_{p_1}\times \cdots \times C_{p_r}=\langle a_1^{p_{1}^{j_1-1}} H_2 \rangle \times \cdots \times \langle a_r^{p_{r}^{j_r-1}}H_r \rangle $. By similar approach one can prove, $w(RGs^ke_{j_1 \cdots j_r }^{(1)})=w(RH_1s^ke_{j_1 \cdots j_r }^{(1)})$.

\begin{example}
Let  $G=C_{3^{n_1}}\times C_{5^{n_2}}\times C_{11^{n_3}}=\langle a_1\rangle \times \langle a_2 \rangle \times \langle a_3\rangle $ and $R=\mathbb{Z}_4$. Assume $\mathcal{C}_1=\langle s^ke^{(1)}_{j_1j_20}\rangle$. Then write 
$e_{j_1j_20}=(\widehat{a_1^{3^{j_1}}}-\widehat{a_1^{3^{j_1-1}}})(\widehat{a_2^{5^{j_2}}}-\widehat{a_2^{5^{j_2-1}}})\widehat{a_3}=(e_{j_1j_20}^{(1)}+e_{j_1j_20}^{(2)})\widehat{a_3}$, where $e_{j_1j_20}^{(1)}=(u_1u_2+u_1^2u_2^2)^2\widehat{a_3}$ and $e_{j_1j_20}^{(2)}=(u_1^2u_2+u_1u_2^2)^2\widehat{a_3}$ where $u_1$ and $u_2$ are defined in Theorem \ref{4}. Further note that 
\begin{align*}
    e_{j_1j_20}^{(1)}&=(u_{1}u_{2}+u_{1}^2u_{2}^2)^2\widehat{a_3}
    =(u_1u_2+u_1^2u_2^2+2u_1^3u_2^3)\widehat{a_3}\\&=[\widehat{a_1^{3^{j_1}}}\widehat{a_2^{5^{j_2}}}(a_2^{2\cdot5^{j_1-1}}+2a_2^{5^j}+a_2^{3\cdot 5^{j-1}}+2a_1^{3^{j_1-1}}(a_2^{2\cdot5^{j_1-1}}+2a_2^{5^j}+a_2^{3\cdot 5^{j-1}})+a_1^{2\cdot 3^{j_1-1}}(a_2^{2\cdot5^{j_1-1}}+2a_2^{5^j}+\\&a_2^{3\cdot 5^{j-1}}) + a_2^{2^2\cdot5^{j_1-1}}+2a_2^{5^j}+a_2^{6\cdot 5^{j-1}}+2a_1^{3^{j_1-1}}(a_2^{2^2\cdot5^{j_1-1}}+2a_2^{5^j}+a_2^{6\cdot 5^{j-1}})+a_1^{3^{j_1-1}}(a_2^{2^2\cdot5^{j_1-1}}+2a_2^{5^j}+\\& a_2^{6\cdot 5^{j-1}})+(1-\widehat{a_1^{3^{j_1-1}}})(1-\widehat{a_2^{5^{j_2-1}}}))]\widehat{a_3}.
\end{align*}
Therefore, $wt(e_{j_1j_20}^{(1)})=(15+3^{j_1-1}5^{j_2-1})\cdot 3^{{n_1}-j_1}\cdot 5^{{n_2}-j_2}\cdot 11^{n_3}$. If $K_1=\langle a_1^{3^{j_1}-1}\rangle \times \langle a_2^{5^{j_2}-1}\rangle $ and then  $wt(R K_1 s^k e_{j_1j_20}^{(1)})\leq (15+3^{j_1-1}5^{j_2-1})\cdot 3^{j_1-1}\cdot 5^{j_2-1}\cdot 11^{n_3}.$ Similarly, we can compute $wt(R G s^k e_{j_1j_20}^{(2)})\leq (15+3^{j_1-1}5^{j_2-1})\cdot 3^{j_1-1}\cdot 5^{j_2-1}\cdot 11^{n_3}.$

If the code $ \mathcal{C}_1=\langle s^ke_{j_1j_2j_3}^{(1)}\rangle$ where $e_{j_1j_2j_3}^{(1)}=(u_1 u_2 u_3+u_1^2 u_2^2 u_3^2)^2$ such that $wt(e_{j_1 j_2j_3}^{(1)})=(108+3^{j_1-1}5^{j_2-1}11^{j_3-1})\cdot3^{n_1-j_1}5^{n_2-j_{2}} 11^{n_3-j_3}.$ Let $H_1=\langle a_1^{3^{j_1}-1}\rangle \times \langle a_2^{5^{j_2}-1}\rangle \times \langle a_3^{11^{j_3}-1}\rangle$ then $wt(R H_1 s^k e_{j_1j_2j_3}^{(1)})=wt(R G s^k e_{j_1j_2j_3}^{(1)})\leq (108+3^{j_1-1}5^{j_2-1}11^{j_3-1})\cdot3^{j_1-1}5^{j_{2}-1} 11^{j_3-1}.$

\vspace{0.3cm}
\noindent The following table provides the number of codewords and the minimum weight of the cyclic code $\mathcal{C}_1$ of the length $n$, where $n=3^{n_1}5^{n_2}11^{n_3}$:
\vspace{0.5cm}
\begin{center}
\begin{tabular}{ |c|c|c| } 
\hline
\textbf{Code} & \textbf{Number of words} & \textbf{Weight} \\
\hline
$\langle s^k\widehat{a_1}\widehat{a_2}\widehat{a_3} \rangle$ & $2^{2-k}$ & $|G|$\\
\hline
$\langle s^k(\widehat{a_1^{3^{j_1}}}-\widehat{a_1^{3^{j_1-1}}})\widehat{a_2}\widehat{a_3}\rangle$ & $2^{(2-k)(3^{j_1}-3^{{j_1}-1})}$ & $2\cdot3^{n_1-j_1}5^{n_2}11^{n_3}$\\
\hline
$\langle s^k\widehat{a_1}(\widehat{a_2^{5^{j_2}}}-\widehat{a_2^{5^{j_2-1}}})\widehat{a_3}\rangle$ & $2^{(2-k)(5^{j_2}-5^{{j_2}-1})}$ & $2\cdot3^{n_1}5^{n_2-j_2}11^{n_3}$\\ 
\hline
$\langle s^k\widehat{a_1}\widehat{a_2}(\widehat{a_3^{11^{j_3}_2}}-\widehat{a_3^{11^{j_3-1}}})\rangle$ & $2^{(2-k)(11^{j_3}-11^{{j_3}-1})}$ & $2\cdot3^{n_1}5^{n_2}11^{n_3-j_3}$\\
\hline
$\langle s^k(u_1u_2+u_1^2u_2^2)\widehat{a_3}\rangle $ & $2^{(2-k)(3^{j_1}-3^{{j_1}-1})(5^{j_2}-5^{j_2-1})/{2}}$ &  \\
\hline
$\langle s^k(u_1u_2u_3+u_1^2u_2^2u_3^2)\rangle$ & $2^{{(2-k)(5^{j_2}-5^{j_2-1})(11^{j_3}-11^{{j_3}-1})}/{4}} $ & \\
\hline
\end{tabular}
\end{center}
\end{example}
\section{Acknowledgment}
The first-named author acknowledges the financial support provided by the Ministry of Education, Government of India. The first author also acknowledges the partial support from the FIST program of the Department of Science and Technology, Government of India, Reference No. SR/FST/MS-I/2018/22(C). The second-named author acknowledges the research support of the Department of Science and Technology (INSPIRE Faculty No. DST/INSPIRE/04/2023/001200), Govt. of India. The third-named author acknowledges the financial support of ANRF (File no. MTR/2022/000616).

\end{document}